\documentclass[final,leqno]{siamltex}

\usepackage{graphicx}
\usepackage[labelformat=simple]{subcaption}
\usepackage{amsmath,amssymb}
\usepackage{mathtools}
\usepackage{algorithm, algorithmic, multicol}
\usepackage{empheq}
\usepackage{float}

\mathtoolsset{showonlyrefs=true}

\makeatletter
\renewcommand*\env@matrix[1][*\c@MaxMatrixCols c]{
  \hskip -\arraycolsep
  \let\@ifnextchar\new@ifnextchar
  \array{#1}}
\makeatother

\makeatletter
\renewcommand{\ALG@name}{\sc Algorithm}
\makeatother

\newcommand{\ua}{\underline{a}}
\newcommand{\ub}{\underline{b}}
\newcommand{\uc}{\underline{c}}
\newcommand{\ty}{\tilde{y}}
\newcommand{\bigO}{\mathcal{O}}
\newcommand{\md}{\mathrm{d}}
\newcommand{\dt}{\mathrm{d}t}

\newcommand{\dty}{\mathrm{d}\tilde{y}}
\newcommand{\dd}{\mathrm{d}t\, \mathrm{d}(x+1)}

\newcommand{\ixx}{\int_{-1}^{x+1}}
\newcommand{\iy}{\int_{-1}^y}

\newcommand{\iiy}{\int_{-1}^y\hspace{-0.05cm}\int_{-1}^{x+1}\hspace{-0.25cm}}

\let\tilde\widetilde

\title{Spectral Approximation of Convolution Operator}

\author{Kuan Xu\thanks{School of Mathematics, Statistics, and Actuarial Science, 
University of Kent, Canterbury, CT2 7FS, UK (\texttt{k.xu@kent.ac.uk}).  The 
work of this author was supported by the Royal Society under Research Grant 
RG160236.}
\and Ana F. Loureiro\thanks{School of Mathematics, Statistics, and Actuarial 
Science, University of Kent, Canterbury, CT2 7FS, UK 
(\texttt{a.loureiro@kent.ac.uk}).}
}

\pagestyle{myheadings}
\markboth{\uppercase{XU AND LOUREIRO}}{\uppercase{SPECTRAL APPROXIMATION OF 
CONVOLUTION OPERATOR}}

%%%%%%%%%%%%%%%%%%%%%%%%%%%%%%%%%%%%%%%%%%%%%%%%%%%%%%%%%%%%%%%%%%%%%%%%%%%%%%%%

\begin{document}

\maketitle

\begin{abstract}
We develop a unified framework for constructing matrix approximations to the
convolution operator of Volterra type defined by functions that are approximated 
using classical orthogonal polynomials on $[-1, 1]$. The numerically stable 
algorithms we propose exploit recurrence relations and symmetric properties 
satisfied by the entries of these convolution matrices. Laguerre-based 
convolution matrices that approximate Volterra convolution operator defined 
by functions on $[0, \infty]$ are also discussed for the sake of completeness. 
\end{abstract}

\begin{keywords}
convolution, Volterra convolution integral, operator approximation, orthogonal 
polynomials, Chebyshev polynomials, Legendre polynomials, Gegenbauer 
polynomials, ultraspherical polynomials, Jacobi polynomials, Laguerre 
polynomials, spectral methods
\end{keywords}

\begin{AMS}
44A35, 65R10, 47A58, 33C45, 41A10, 65Q30 
% 44A35   Convolution
% 65R10   Integral transforms
% 47A58   Operator approximation theory
% 33C45   Orthogonal polynomials and functions of hypergeometric type 
%         (Jacobi, Laguerre, Hermite, Askey scheme, etc.)
% 41A10   Approximation by polynomials
% 65Q30   Recurrence relations
\end{AMS}

\thispagestyle{plain}

%%%%%%%%%%%%%%%%%%%%%%%%%%%%%%%%%%%%%%%%%%%%%%%%%%%%%%%%%%%%%%%%%%%%%%%%%%%%%%%%

\section{Introduction}
Convolution operators abound in science and engineering. They can be found in, 
for example, statistics and probability theory \cite{hog}, computer vision 
\cite{for}, image and signal processing \cite{dam}, and system control 
\cite{son}. In applied mathematics, convolution operators figure in many 
topics, from Green's function \cite{duf} to Duhamel's principle \cite{sta}, 
from non-reflecting boundary condition \cite{hag} to large eddy simulation 
\cite{wag}, from approximation theory \cite{tre} to fractional calculus 
\cite{hil}. Furthermore, convolution operators are the key building blocks of 
the convolution integral equations \cite{atk, lin, bru}.  

Given two continuous functions $f(x)$ and $g(x)$ on the interval $[-1,1]$, the 
(left-sided) convolution operator $V$ of Volterra type defined by $f(x)$ is 
given by 
\begin{equation}
V[f](g) = h(x) = \ixx f(x-t) g(t)\dt,~~~~~x \in [-2, 0]. \label{V}
\end{equation}
For more general cases where $f(x):[a, b] \to \mathbb{R}$ and $g(x):[c, d] \to 
\mathbb{R}$ with $b-a = d-c$, the convolution operator can be shown equivalent 
to \eqref{V} via changes of variables and rescaling. Thus we only consider 
\eqref{V} throughout without loss of generality. 

If $f(x)$ and $g(x)$ have a little extra smoothness beyond continuity, they can 
be approximated by polynomials $f_M(x)$ and $g_N(x)$ of sufficiently high degree 
so that $\|f(x) - f_M(x)\|_{\infty}$ and $\|g(x) - g_N(x)\|_{\infty}$ are on 
the order of machine precision \cite{tre, dri}. In this article, we focus on 
the approximation of $V[f]$ when $f(x)$ and $g(x)$ are approximated using the 
orthogonal polynomials of the Jacobi family, e.g. the Chebyshev polynomials:
\begin{equation}
f_M(x) = \sum_{m=0}^M a_m T_m(x) ~~~\mbox{and}~~~ g_N(x) =
\sum_{n=0}^N b_n T_n(x), \label{fT}
\end{equation}
where $T_m(x) = \cos (m \arccos x)$ for $x \in [-1, 1]$ is the $m$-th Chebyshev 
polynomial. 

This way, the polynomial approximant $h_{M+N+1}(x)$ of the convolution $h(x)$ 
can be written as the product of a $[-2, 0] \times (N+1)$ column quasi-matrix%%%
\footnote{An $[a, b] \times n $ column quasi-matrix is a matrix with $n$ 
columns, where each column is a univariate function defined on an interval $[a, 
b]$, and can be deemed as a continuous analogue of a tall-skinny matrix, where 
the rows are indexed by a continuous, rather than discrete, variable. For the 
notion of quasi-matrices, see, for example, \cite{ste, bat}.} %%%%%
$\tilde{R}$ and the coefficient vector $\ub=(b_0,\ldots,b_N)^T$
\begin{equation}
h_{M+N+1}(x) = \tilde{R} \ub =
\begin{bmatrix}[c|c|c|c]
 &  &  &  \\
 &  &  &  \\
(f_M{\ast}T_0)(x) & (f_M{\ast} T_1)(x) & \cdots & (f_M{\ast}T_N)(x) \\
 &  &  &  \\
 &  &  &  
\end{bmatrix}
\ub, \label{quasi}
\end{equation}
where the $n$-th column of $\tilde{R}$ is the convolution of $f_M(x)$ and 
$T_n(x)$. Besides the notation we have introduced, asterisks are also used here 
and in the remainder of this paper to denote the convolution of two functions. 
For example, $(f_M\ast T_n)(x)$ is the convolution of $f_M(x)$ and $T_n(x)$. 
For convenience, we will also denote by $\tilde{R}_n$ the $n$-th column of 
$\tilde{R}$, i.e. $(f_M\ast T_n)(x)$, with the index $n$ starting from $0$.

If $h_{M+N+1}(x)$ and $\{(f_M \ast T_n)(x)\}_{n=0}^N$ are translated to 
$[-1, 1]$ by the change of variables $y = x+1$, they can be expressed as 
Chebyshev series of degree $M+N+1$ and $M+n+1$, respectively:
\begin{equation}
\begingroup
\arraycolsep=10pt
\begin{array}{l l}
\begin{aligned}
h_{M+N+1}(y) &= \sum_{k = 0}^{M+N+1} c_k T_k(y), \\
(f_M \ast T_n)(y) &= \sum_{k = 0}^{M+N+1} R_{k, n} T_k(y), ~~~~ 0 \leqslant n 
\leqslant N,
\end{aligned}
\end{array}
\endgroup
\label{chebexpan}
\end{equation}
where $y \in [-1,1]$ and $R_{k, n} =0$ for any $k > M+n+1$. Substituting 
\eqref{chebexpan} into \eqref{quasi}, we have
\begin{equation}
\sum_{k = 0}^{M+N+1} c_k T_k(y) = \sum_{n = 0}^N b_n \hspace{-2mm}\sum_{k = 
0}^{M+N+1} R_{k, n} T_k(y),
\end{equation}
or, equivalently,
\begin{equation}
\uc = R\ub, \label{cRb}
\end{equation}
where $\uc=(c_0,\ldots,c_{M+N+1})^T$ and $R$ is an $(M+N+2) \times (N+1)$ 
matrix that collects the Chebyshev coefficients of $\tilde{R}_n$ for $0 
\leqslant n \leqslant N$. Since $R_{k, n} =0$ for $k > M+n+1$, the lower 
triangular part of $R$ below $(M+1)$-th subdiagonal are zeros (see Figure 
\ref{FIG:unstable}). We shall call $R$ the convolution matrix that approximates 
the convolution operator $V[f]$. What makes $R$ important is the fact that with 
$R$ available either of $\ub$ and $\uc$ can be calculated when the other is 
given. Thus our goal is to calculate $R$ accurately and efficiently.

Thus far the only attempt to approximate the convolution operator in the same 
vein was given by Hale and Townsend \cite{hal}, where they considered the same 
problem but with $f(x)$ and $g(x)$ approximated by Legendre series. Their 
method exploits the fact that the Fourier transform of Legendre polynomial 
$P_m(x)$ is spherical Bessel function $j_m(x)$ with a simple rescaling. They 
also show that $(P_m \ast P_n)(x)$ can be represented as a rescaled inverse 
Fourier transform of the product $j_m(x)j_n(x)$, due to the convolution theorem. 
Based on these results, the entries of a Legendre-based convolution matrix 
$R^{(1/2)}$ are represented as infinite integrals involving triple-product of 
spherical Bessel functions and the three-term recurrence satisfied by spherical 
Bessel functions finally leads to a four-term recurrence relation satisfied by 
the entries of $R^{(1/2)}$. Unfortunately, the Fourier transforms of other 
classical orthogonal polynomials do not have a simple representation in terms of 
spherical Bessel functions or other special functions that enjoy a similar 
recurrence relation \cite{dix}. Therefore, the method in \cite{hal} cannot be 
easily extended to the cases where $f(x)$ and $g(x)$ are approximated using 
other classical orthogonal polynomials, e.g. Chebyshev, and this is what this 
article addresses.

In another work with a similar setting \cite{xu}, spectral approximations of 
convolution operators are constructed when $f(x)$ and $g(x)$ are approximated by 
Fourier extension approximants. It is shown that convolution can be represented 
in terms of products of Toeplitz matrices and coefficient vectors of the Fourier 
extension approximants, based on which an $\bigO(N' (\log N')^2)$ fast algorithm 
is derived for approximating the convolution of compactly supported functions, 
where $N'$ is the number of degrees of freedom in each of the Fourier extension 
approximants for $f(x)$ and $g(x)$.

In an investigation carried out simultaneously \cite{lou}, closed-form formulae 
are derived for the convolution of classical orthogonal polynomials. However, 
this explicit formula is too complicated and numerically intractable to be 
computationally useful for direct construction of $R$.

In this article, we first generalize the recurrence relation found in \cite{hal} 
to Chebyshev-based convolution matrices. Instead of resorting to the Fourier 
transform of Chebyshev polynomials and recurrence of spherical Bessel functions, 
we exploit the three-term recurrence of the derivatives of Chebyshev polynomials 
to show a recurrence relation satisfied by the columns of $\tilde{R}$. Further, 
a five-term recurrence relation satisfied by the entries of $R$ can be obtained 
by replacing columns of $\tilde{R}$ with their Chebyshev coefficients. With 
this recurrence relation and a symmetric property, the entries of $R$ can be 
calculated efficiently and numerically stably, yielding spectral approximations 
to the convolution operators of Volterra type. The accuracy of $R$ and 
its applications are shown by various numerical examples. Finally, we extend 
our approach to broader Jacobi-family orthogonal polynomials, where the results 
of \cite{hal} are covered as a special case.

Our exposition could either begin with the Jacobi-based convolution and treat 
Gegenbauer, Legendre, and Chebyshev as special cases in a cascade, or start with 
Chebyshev, extend to Gegenbauer, and further to Jacobi. We choose the latter, 
since most derivations and proofs are much simpler with Chebyshev polynomials 
and analogues can be easily drawn to others. Also, the Chebyshev-based 
convolution is the most commonly-used in practice and, therefore, deserves a 
more elaborated discussion. 

Our discussion is organized as follows. In Section \ref{SEC:cheb}, we derive 
the recurrence relation satisfied by columns of $\tilde{R}$. In Section 
\ref{SEC:algo}, the recurrence relation for entries of $R$ is derived based on 
that of $\tilde{R}$. We show a stability issue when this recurrence relation is 
naively used for the construction of $R$ and provide a numerically stable 
algorithm by making use of the symmetric structure of $R$. In Section 
\ref{SEC:other}, we extend the results of Sections \ref{SEC:cheb} and 
\ref{SEC:algo} to Gegenbauer- and Jacobi-based convolution matrices. The main 
results of this paper are complemented in Section \ref{SEC:lag} by a brief 
discussion about the approximation of the convolution operators defined by 
functions on $[0, \infty]$ using weighted Laguerre polynomials, before we give 
a few closing remarks in the final section. 

%%%%%%%%%%%%%%%%%%%%%%%%%%%%%%%%%%%%%%%%%%%%%%%%%%%%%%%%%%%%%%%%%%%%%%%%%%%%%%%%

\section{Recurrence satisfied by convolutions of Chebyshev polynomials} 
\label{SEC:cheb}
We start with the following recurrence relation that can be derived from the 
fundamental three-term recurrence relation of Chebyshev polynomials.
\begin{lemma} \label{LEM:recT}
For $n \geqslant 0$, the $n$-th Chebyshev polynomial $T_n(x)$ can be written as 
a combination of the derivatives of $T_{n-1}$ and $T_{n+1}$:
\begin{subequations}
\begin{equation}
\displaystyle T_n(x) =
\begin{cases}
\displaystyle \frac{1}{2(n+1)}\frac{\md T_{n+1}(x)}{\md x} - \frac{1}{2(n-1)} 
\frac{\md T_{|n-1|}(x)}{\md x}, & 
\quad n \neq 1, \\[4mm]
\displaystyle \frac{1}{4}\frac{\md T_2(x)}{\md x}, & \quad n = 1,
\end{cases}
\label{drecT}
\end{equation}
by integrating which we have the recurrence relation of Chebyshev polynomials 
and their indefinite integrals:
\begin{equation}
\displaystyle \int T_n(t)\md t =
\begin{cases}
\displaystyle \frac{T_{n+1}(x)}{2(n+1)} - \frac{T_{|n-1|}(x)}{2(n-1)}, & \quad 
n \neq 1, \\[4mm]
\displaystyle T_2(x)/4, & \quad n = 1.
\end{cases}
\label{irecT}
\end{equation}
\label{recT}
\end{subequations}
\end{lemma}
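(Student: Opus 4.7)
The proof reduces to two classical Chebyshev identities, so the plan is essentially to chain them together and then integrate.

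First, I would recall the well-known identity
\begin{equation}
2T_n(x) = U_n(x) - U_{n-2}(x), \qquad n \geqslant 2,
\end{equation}
relating Chebyshev polynomials of the first and second kind (immediate from $2\cos n\theta \sin\theta = \sin(n+1)\theta - \sin(n-1)\theta$, where $x=\cos\theta$). Combined with the standard differentiation formula
\begin{equation}
\frac{\md T_k(x)}{\md x} = k\, U_{k-1}(x), \qquad k \geqslant 1,
\end{equation}
we can rewrite $U_n = T_{n+1}'/(n+1)$ and $U_{n-2} = T_{n-1}'/(n-1)$, which delivers \eqref{drecT} directly for every $n \geqslant 2$.

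Next I would dispose of the two low-index cases by hand. For $n=1$, since $T_2(x)=2x^2-1$, we have $\md T_2/\md x = 4x = 4T_1(x)$, giving the exceptional formula. For $n=0$, the generic expression evaluates as $\frac{1}{2}T_1'(x) - \frac{1}{-2}T_1'(x) = 1 = T_0(x)$, using $|n-1|=1$ and $T_1'=1$; so the $n=0$ case is already covered by the first branch of \eqref{drecT} (as stated). The $n=1$ branch of \eqref{drecT} is needed only to avoid division by $2(n-1)=0$.

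Finally, the integral relation \eqref{irecT} follows by antidifferentiating \eqref{drecT} term by term. The arbitrary constants of integration can be absorbed into the choice of antiderivative for $\int T_n(t)\md t$, so there is nothing to verify beyond pointwise differentiation. I do not anticipate any substantive obstacle: the only mild subtlety is the singular denominator at $n=1$, which is why the statement is phrased as a case split.
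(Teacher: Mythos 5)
Your proof is correct, and it is essentially the standard derivation: the paper itself does not prove the lemma but simply cites Mason and Handscomb, where the identity is obtained from the same trigonometric relation $2\cos n\theta\,\sin\theta=\sin(n+1)\theta-\sin(n-1)\theta$ that underlies your chain $2T_n=U_n-U_{n-2}$ and $T_k'=kU_{k-1}$. Your handling of the exceptional cases $n=0$ (covered by the generic branch via $|n-1|$) and $n=1$ (the singular denominator) is also right.
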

\begin{proof}
The proof can be found in many standard texts on Chebyshev polynomials. See, 
for example, \cite[p. 32]{mas}.
\end{proof}

Like the convolution of functions defined on the entire real line, 
the convolution operator $V[f]$ also enjoys commutativity:
\begin{equation}
\ixx f(x-t)g(t) \dt = \ixx f(t)g(x-t) \dt, ~~~ x \in [-2, 0],\label{comm}
\end{equation}
which can be shown by a change of variables with $T = x-t$.

Our first main result is the recurrence relation satisfied by the convolutions 
of a Chebyshev series and Chebyshev polynomials, which follows as a consequence 
of \eqref{drecT}.
\begin{theorem}[Recurrence of convolutions of Chebyshev polynomials] 
\label{THM:T}
The convolutions of Chebyshev polynomials and the Chebyshev series $f_M(x)$ 
given in \eqref{fT} recurse as follows:
\begin{subequations}
\begin{equation}
\iy f_M(x-t) T_0(t) \dt = \iy f_M(t) \dt, \label{T0}
\end{equation}
\begin{equation}
\iy f_M(x-t) T_1(t) \dt = \iiy f_M(t) \dd - \iy f_M(t) \dt, \label{T1}
\end{equation}
\begin{equation}
\iy f_M(x-t) T_2(t) \dt = 4\iiy f_M(x-t) T_1(t) \dd + \iy f_M(t) \dt, 
\label{T2}
\end{equation}
and for $n \geqslant 2$,
\begin{equation}
\begin{multlined}
\iy f_M(x-t) T_{n+1}(t) \dt = 2(n+1)\iiy f_M(x-t) T_n(t) \dd \\
+\frac{n+1}{n-1} \iy f_M(x-t) T_{n-1}(t) \dt + \frac{2(-1)^n}{n-1} \iy 
f_M(t)\dt,
\end{multlined}
\label{Tn}
\end{equation}
\label{T}
\end{subequations}
where $x \in [-2, 0]$ and $y = x+1 \in [-1, 1]$. 
\end{theorem}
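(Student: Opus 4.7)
The plan is to derive \eqref{Tn} by integrating the recurrence in Lemma \ref{LEM:recT} once, thereby obtaining a pointwise identity for $T_{n+1}(t)$ in terms of $T_{n-1}(t)$ and an antiderivative of $T_n$, and then taking the convolution of this identity against $f_M$. Concretely, for $n\geq 2$, I would first rearrange \eqref{drecT} into $T'_{n+1}(t)-\frac{n+1}{n-1}T'_{n-1}(t)=2(n+1)T_n(t)$, integrate from $-1$ to $t$, and use the boundary values $T_k(-1)=(-1)^k$ to obtain
\[
T_{n+1}(t) = 2(n+1)\int_{-1}^t T_n(s)\,\md s + \frac{n+1}{n-1}\,T_{n-1}(t) + \frac{2(-1)^n}{n-1},
\]
where the constant arises from the elementary simplification $T_{n+1}(-1)-\frac{n+1}{n-1}T_{n-1}(-1)=\frac{2(-1)^n}{n-1}$.

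Multiplying this pointwise identity by $f_M(x-t)$ and integrating $t$ from $-1$ to $y=x+1$ yields the three terms on the right-hand side of \eqref{Tn}. The $T_{n-1}$ term is immediate; the constant term reduces to $\frac{2(-1)^n}{n-1}\iy f_M(t)\,\dt$ via the commutativity \eqref{comm}. The main step is the first term: to match $\iy f_M(x-t)\int_{-1}^t T_n(s)\,\md s\,\dt$ with the iterated integral $\iiy f_M(x-t)T_n(t)\,\dd$, I would invoke Fubini on each expression, observing that the outer variable of $\iiy\cdots\dd$ plays the role of $x+1$ and ranges over $[-1,y]$; the substitution $u=y-1-t$ in the inner integrals then shows both sides equal $\int_{-1}^y T_n(s)\int_{-1}^{y-1-s}f_M(u)\,\md u\,\md s$.

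The special cases \eqref{T0}--\eqref{T2} would follow by the same strategy, each using the appropriate branch of Lemma \ref{LEM:recT}: \eqref{T0} is immediate from commutativity since $T_0\equiv 1$; for \eqref{T1}, I would use the identity $T_1(t)=\int_{-1}^t T_0(s)\,\md s-1$ that follows from \eqref{irecT} at $n=0$; and for \eqref{T2}, the $n=1$ branch of \eqref{drecT} gives $T_2(t)=4\int_{-1}^t T_1(s)\,\md s+1$, from which the claim follows by the same Fubini argument. I expect the main obstacle to be not any deep estimate but a careful reading of the notation $\iiy\cdots\dd$---an iterated integral whose outer variable plays the role of $x+1$---and matching it to the swap of integration coming out of convolving the pointwise identity; once this is spelled out, the remainder is routine bookkeeping.
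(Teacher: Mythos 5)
Your argument is correct, but it runs in the opposite direction to the paper's. The paper applies Lemma \ref{LEM:recT} to the \emph{first} argument of the convolution, writing $T_n(x-t)$ in terms of $\tfrac{\md}{\md x}T_{n\pm1}(x-t)$; it then needs the Leibniz integral rule (which produces the boundary terms $(-1)^{n\pm1}T_m(x+1)$ that become the $\iy f_M(t)\dt$ term), integrates the resulting identity in $x+1$ from $-1$ to $y$ using the vanishing of the convolution at $x=-2$, and finally invokes commutativity \eqref{comm} to move the recurrence index back onto the second argument. You instead integrate the recurrence once in the $t$ variable to get the pointwise identity $T_{n+1}(t)=2(n+1)\int_{-1}^{t}T_n(s)\,\md s+\tfrac{n+1}{n-1}T_{n-1}(t)+\tfrac{2(-1)^n}{n-1}$ (your constant is right: $(-1)^{n+1}(1-\tfrac{n+1}{n-1})=\tfrac{2(-1)^n}{n-1}$), convolve it against $f_M$, and then prove that convolving with an antiderivative equals the antiderivative of the convolution by Fubini on the triangle $-1\leqslant s\leqslant t\leqslant y$; I checked that both of your iterated integrals do reduce to $\int_{-1}^{y}T_n(s)\int_{-1}^{y-1-s}f_M(u)\,\md u\,\md s$, which is exactly the content of the $\iiy\cdots\dd$ notation. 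What your route buys is that it works pointwise on the polynomials first, avoids the Leibniz rule and the final commutativity swap on the recursion terms, and makes the origin of the constant term transparent; what it costs is the extra Fubini lemma identifying $f_M\ast\bigl(\int T_n\bigr)$ with $\int (f_M\ast T_n)$, which the paper gets for free from the fundamental theorem of calculus in the $x$ variable together with the vanishing of the convolution at $x=-2$. The special cases \eqref{T0}--\eqref{T2} are handled correctly in both treatments. Either proof is acceptable; yours is arguably the more elementary of the two.
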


\begin{proof}
Let us first show \eqref{Tn} by differentiating $\ixx T_{n+1}(x-t) T_m(t) \dt$ 
with respect to $x$. The Leibniz integral rule gives
\begin{equation}
\frac{\md}{\md x} \hspace{-1mm} \ixx \hspace{-2mm} T_{n+1}(x-t) 
T_m(t) \dt = \ixx \hspace{-1mm} \frac{\md T_{n+1}(x-t)}{\md x} 
T_m(t) \dt + (-1)^{n+1}T_m(x+1), \label{t1_np1}
\end{equation}
where $T_{n+1}(-1) = (-1)^{n+1}$ is used. Similarly, we have
\begin{equation}
\frac{\md}{\md x} \hspace{-1mm} \ixx \hspace{-2mm} T_{n-1}(x-t) 
T_m(t) \md t = \ixx \hspace{-1mm} \frac{\md T_{n-1}(x-t)}{\md x} 
T_m(t) \dt + (-1)^{n-1}T_m(x+1). \label{t1_nm1}
\end{equation}
By combining \eqref{drecT}, \eqref{t1_np1}, and \eqref{t1_nm1}, we have
\begin{equation}
\begin{multlined}
\frac{\md}{\md x} \hspace{-1mm} \ixx T_{n+1}(x-t) T_m(t) \dt = 
2(n+1) \ixx T_n(x-t)T_m(t)\dt\\
+\frac{n+1}{n-1} \frac{\md}{\md x} \hspace{-1mm} \ixx T_{n-1}(x-t) 
T_m(t) \dt + \frac{2(-1)^n}{n-1}T_m(x+1).
\end{multlined}
\end{equation}
Noting that all the terms are polynomials of $x+1$, we integrate with respect to 
$x+1$ from $-1$ to an arbitrary $y \in [-1,1]$ to get rid of the derivatives:
\begin{equation*}
\begin{multlined}
\left[ \ixx T_{n+1}(x-t) T_m(t) \dt \right]_{x+1=-1}^{x+1=y} = 2(n+1)\iiy 
T_n(x-t) T_m(t) \dd \\
+\frac{n+1}{n-1} \left[ \ixx T_{n-1}(x-t) T_m(t) \dt \right]_{x+1=-1}^{x+1=y} + 
\frac{2(-1)^n}{n-1}\iy T_m(t) \dt.
\end{multlined}
\end{equation*}
Since $\ixx T_{n\pm 1}(x-t) T_m(t) \dt$ vanishes at $x = -2$, the last equation 
becomes
\begin{equation}
\begin{multlined}
\iy T_{n+1}(x-t) T_m(t) \dt = 2(n+1)\iiy T_n(x-t) T_m(t) \dd \\
+\frac{n+1}{n-1} \iy T_{n-1}(x-t) T_m(t) \dt + \frac{2(-1)^n}{n-1}\iy T_m(t) 
\dt.
\end{multlined} 
\label{T_interm}
\end{equation}
Here, we have intentionally left the variable $x$ in the integrands of the 
first two single integrals without replacing it by $y-1$ in order to keep the 
integrands neat.

By the commutativity \eqref{comm}, we are free to swap the arguments $x-t$ 
and $t$ in all convolutions to have
\begin{equation}
\begin{multlined}
\iy T_m(x-t) T_{n+1}(t) \dt = 2(n+1)\iiy T_m(x-t) T_n(t) \dd\\
+\frac{n+1}{n-1} \iy T_m(x-t) T_{n-1}(t)\dt+\frac{2(-1)^n}{n-1}\iy T_m(t)\dt.
\end{multlined}
\end{equation}
Finally, \eqref{Tn} is obtained by linearity.

We can show \eqref{T1} and \eqref{T2} similarly and obtain \eqref{T0} by noting 
that $T_0(x) = 1$.
\end{proof}

Theorem \ref{THM:Rn} reveals a recurrence relation satisfied by the columns of 
$\tilde{R}$. To see this, we replace $\ixx f_M(x-t) T_n(t) \dt$ or $\iy f_M(x-t) 
T_n(t) \dt$ in \eqref{T} by the much compacter notation $\tilde{R}_n(y)$ to have
\begin{subequations}
\begin{empheq}{alignat=2}
\tilde{R}_0(y) &= \iy f_M(t) \dt, \label{TTR0}\\[-1mm]
\tilde{R}_1(y) &= \iy \tilde{R}_0(\ty) \dty - \tilde{R}_0(y), 
\label{TTR1}\\[-1mm]
\tilde{R}_2(y) &= 4\iy \tilde{R}_1(\ty)\dty + \tilde{R}_0(y), 
\label{TTR2}\\[-1mm]
\tilde{R}_{n+1}(y) &= 2(n+1)\iy \tilde{R}_n(\ty) \dty +\frac{n+1}{n-1} 
\tilde{R}_{n-1}(y) + \frac{2(-1)^n}{n-1}\tilde{R}_0(y) ~~~(n \geqslant 2). 
\label{TTRn}
\end{empheq}
\label{TTR}
\end{subequations}

Of course, the terms in \eqref{TTR} are continuous functions of $y$ and would 
not be useful for numerical computing until they are fully discretized.
%%%%%%%%%%%%%%%%%%%%%%%%%%%%%%%%%%%%%%%%%%%%%%%%%%%%%%%%%%%%%%%%%%%%%%%%%%%%%%%%

\section{Constructing the convolution matrices} \label{SEC:algo}

In this section, we show the discrete counterpart of the recurrence relation 
\eqref{T} or \eqref{TTR} based on which the convolution matrix $R$ can be 
constructed. We begin with the integration of a Chebyshev series.
\begin{lemma}[Indefinite integral of a Chebyshev series]\label{LEM:int}
For Chebyshev series $\phi(x) = \sum_{j=0}^J \alpha_j T_j(x)$ with 
$x \in [-1, 1]$, its indefinite integral, when expressed in terms of Chebyshev 
polynomials, is
\begin{equation*}
\int_{-1}^x \phi(t) dt = \sum_{j=0}^{J+1} \tilde{\alpha}_j T_j(x),
\end{equation*}
with the coefficients
\begin{subequations}
\begin{equation}
\displaystyle \tilde{\alpha}_j =
\begin{cases}
\displaystyle \frac{\alpha_{j-1}-\alpha_{j+1}}{2j}, & \quad 2 \leqslant j 
\leqslant J+1,\\[3mm]
\displaystyle \alpha_{0}-\frac{\alpha_{2}}{2}, & \quad j = 1, \\[1mm]
\displaystyle \sum_{k=1}^{J+1} (-1)^{k+1}\tilde{\alpha}_k , & \quad j = 0, \\
\end{cases}
\label{antideriv}
\end{equation}
\end{subequations}
where $\alpha_{J+1} = \alpha_{J+2} = 0$.
\end{lemma}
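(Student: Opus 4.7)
The plan is to integrate the Chebyshev series term by term using equation \eqref{irecT} of Lemma \ref{LEM:recT}, collect contributions to each Chebyshev mode, and fix the constant of integration using the endpoint condition $\int_{-1}^{-1}\phi(t)\md t = 0$.

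First I would write
\[
\int_{-1}^x \phi(t)\md t = \sum_{j=0}^J \alpha_j \int_{-1}^x T_j(t)\md t
\]
and apply \eqref{irecT} to each term. A degree-$n$ Chebyshev polynomial with $n\neq 1$ integrates to $T_{n+1}/(2(n+1)) - T_{|n-1|}/(2(n-1))$ (up to an additive constant), while $T_1$ integrates to $T_2/4$. In particular, $\int T_0\, \md t = T_1$ after combining the two terms of \eqref{irecT} at $n=0$.

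Next I would regroup the sum by the index $j$ of the resulting Chebyshev polynomial. For $j \geqslant 2$, the only contributions to $T_j$ come from integrating $T_{j-1}$ (supplying $+\alpha_{j-1}/(2j)$) and $T_{j+1}$ (supplying $-\alpha_{j+1}/(2j)$), giving $\tilde\alpha_j = (\alpha_{j-1}-\alpha_{j+1})/(2j)$, with the convention $\alpha_{J+1}=\alpha_{J+2}=0$ taking care of the top end. For $j=1$, the contributions are $\alpha_0$ (from $\int T_0\,\md t = T_1$) and $-\alpha_2/2$ (from the $-T_1/2$ piece of $\int T_2\,\md t$), yielding $\tilde\alpha_1 = \alpha_0 - \alpha_2/2$. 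This verifies the first two cases of \eqref{antideriv}.

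Finally, to fix $\tilde\alpha_0$ (the overall constant of integration), I would impose that the definite integral vanishes at the lower limit: evaluating $\sum_{j=0}^{J+1}\tilde\alpha_j T_j(-1)=0$ with $T_j(-1)=(-1)^j$ gives $\tilde\alpha_0 = \sum_{k=1}^{J+1}(-1)^{k+1}\tilde\alpha_k$. No step here looks to be a genuine obstacle; the only mild care needed is bookkeeping the special cases $n=0,1$ in \eqref{irecT} so that the formula for $\tilde\alpha_1$ is not inadvertently included in the generic $j\geqslant 2$ formula.
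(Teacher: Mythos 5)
Your proposal is correct and follows exactly the route the paper takes: the paper's proof simply states that the lemma is a straightforward consequence of \eqref{irecT} (citing Mason and Handscomb), and your term-by-term integration, regrouping by mode, and fixing of $\tilde{\alpha}_0$ via $T_j(-1)=(-1)^j$ is precisely the computation being alluded to. The bookkeeping of the special cases $n=0,1$ is handled correctly, so nothing is missing.
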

\begin{proof}
This is a straightforward result of \eqref{irecT}. A slightly different version 
of this lemma can be found in \cite[\S 2.4.4]{mas}.
\end{proof}

Now we have all the ingredients for computing the entries of $R$. By 
\eqref{TTR0}, $R_{:,0}$ are the Chebyshev coefficients of the indefinite 
integral of $f_M(x)$ subject to $\tilde{R}_0(-1) = 0$. We state this as a 
theorem with the proof omitted.

\begin{theorem}[Construction of the zeroth column of $R$] \label{THM:R0}
The entries of the zeroth column of $R$ are
\begin{equation}
R_{k,0} =
\begin{cases}
\displaystyle 0 &  k > M+1, \\[1mm]
\displaystyle \frac{a_{k-1}-a_{k+1}}{2k} & 2 \leqslant k \leqslant M+1,\\[3mm]
\displaystyle a_0-\frac{a_2}{2} & \quad k = 1, \\[1mm]
\displaystyle \sum_{j=1}^{M+1} (-1)^{j+1}R_{j,0} & \quad k = 0,
\end{cases}
\label{R0}
\end{equation}
with $a_{M+1} = a_{M+2} = 0$.
\end{theorem}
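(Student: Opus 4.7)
The plan is to recognize that this theorem is a direct corollary of Lemma \ref{LEM:int}. By \eqref{TTR0}, the zeroth column function satisfies $\tilde{R}_0(y) = \int_{-1}^y f_M(t)\,dt$, and by the Chebyshev expansion convention in \eqref{chebexpan} the entries $R_{k,0}$ are precisely the Chebyshev coefficients of this indefinite integral. So the whole task reduces to expressing the Chebyshev coefficients of $\int_{-1}^y f_M(t)\,dt$ in terms of the coefficients $a_m$ of $f_M$.

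The first step is to apply Lemma \ref{LEM:int} with $\phi = f_M$, so that $\alpha_j = a_j$ and $J = M$. The three nontrivial cases of \eqref{R0} then fall out from the three cases of \eqref{antideriv}: the range $2 \leqslant k \leqslant M+1$ and the value $k = 1$ transfer verbatim, while the formula for $k = 0$ is obtained by relabeling $\tilde{\alpha}_k \mapsto R_{k,0}$. The $k = 0$ formula encodes the boundary condition $\tilde{R}_0(-1) = 0$ (via $T_j(-1) = (-1)^j$), which is built into Lemma \ref{LEM:int} through its choice of lower limit $-1$ and is automatic in our setting because $\int_{-1}^{-1} f_M(t)\,dt = 0$.

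The remaining case $R_{k,0} = 0$ for $k > M+1$ reflects the elementary fact that $f_M$ has degree at most $M$, so its indefinite integral is a polynomial of degree at most $M+1$ and has no Chebyshev components beyond $T_{M+1}$; this is consistent with the banded lower-triangular structure of $R$ described after \eqref{cRb}. There is no genuine obstacle here—the theorem is a mechanical specialization of Lemma \ref{LEM:int}, and the entire argument amounts to matching up indices between \eqref{antideriv} and \eqref{R0}.
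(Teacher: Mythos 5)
Your proposal is correct and matches the paper's intended argument exactly: the paper explicitly notes that, by \eqref{TTR0}, the entries $R_{:,0}$ are the Chebyshev coefficients of $\int_{-1}^y f_M(t)\,\md t$ subject to $\tilde{R}_0(-1)=0$, and omits the proof as a mechanical application of Lemma \ref{LEM:int}. Your index-matching, the observation that the $k=0$ case enforces the vanishing at $y=-1$, and the degree argument for $k>M+1$ are all exactly what the paper leaves implicit.
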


With the zeroth column of $R$, we can recurse for the subsequent columns as 
suggested by \eqref{TTR1}, \eqref{TTR2}, and \eqref{TTRn}.
\begin{theorem}[Recurrence of columns of $R$] \label{THM:Rn}
For $1 \leqslant k \leqslant M+N$, the entries 
of $R$ have the following recurrence relation:
\begin{subequations}
\begin{equation}
R_{k, 1}=-R_{k, 0}+\frac{1}{2k}R'_{k-1, 0}-\frac{1}{2k}R_{k+1, 0} \label{R1}
\end{equation}
\begin{equation}
R_{k, 2}=R_{k, 0}+\frac{2}{k}R'_{k-1, 1}-\frac{2}{k}R_{k+1, 1} \label{R2}
\end{equation}
and when $n \geqslant 2$,
\begin{equation}
R_{k, n+1}=\frac{2(-1)^n}{n-1}R_{k, 0}+\frac{n+1}{n-1}R_{k, n-1} + 
\frac{n+1}{k} R'_{k-1, n}-\frac{n+1}{k}R_{k+1, n}, \label{Rn}
\end{equation}
where the prime denotes that the coefficient of the term is doubled when $k=1$. 

For any $n \geqslant 1$,
\begin{equation}
R_{0, n} = \sum_{j=1}^{M+1} (-1)^{j+1}R_{j, n}. \label{constants}
\end{equation}
\label{TR}
\end{subequations}
\end{theorem}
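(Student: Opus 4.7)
The plan is to promote the continuous recurrences \eqref{TTR} on $\tilde{R}_n(y)$ to discrete recurrences on their Chebyshev coefficients by matching the coefficient of $T_k(y)$ on each side. The central tool is Lemma \ref{LEM:int}, which tells us how indefinite integration acts on Chebyshev coefficients: if $\tilde{R}_n(y) = \sum_k R_{k,n} T_k(y)$, then the $k$-th Chebyshev coefficient of $\iy \tilde{R}_n(\ty)\,\dty$ equals $(R_{k-1,n}-R_{k+1,n})/(2k)$ when $k\ge 2$ and $R_{0,n}-R_{2,n}/2$ when $k=1$, while the zeroth coefficient is a free constant of integration.

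Substituting the Chebyshev expansions of $\tilde{R}_{n+1}$, $\tilde{R}_n$, $\tilde{R}_{n-1}$ and $\tilde{R}_0$ into \eqref{TTRn} and equating coefficients of $T_k(y)$ for $k\ge 2$ immediately yields \eqref{Rn}, since $2(n+1)/(2k)=(n+1)/k$. For $k=1$ the same substitution produces a term $2(n+1)\bigl(R_{0,n}-R_{2,n}/2\bigr)$, so relative to the uniform $k\ge 2$ formula the coefficient $R_{0,n}$ picks up an extra factor of two; this is precisely what the prime notation in \eqref{Rn} is designed to encode. Applying the same argument to \eqref{TTR1} and \eqref{TTR2} yields \eqref{R1} and \eqref{R2}, again with the $k=1$ anomaly absorbed into the prime.

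The remaining identity \eqref{constants} fixes the zeroth Chebyshev coefficient of each $\tilde{R}_n$, which the previous matching leaves undetermined. The key observation is the boundary condition $\tilde{R}_n(-1)=0$, valid because the underlying convolution $\ixx f_M(x-t)T_n(t)\,\dt$ vanishes at $x=-2$ (equivalently $y=-1$). Evaluating the Chebyshev expansion at $y=-1$ using $T_k(-1)=(-1)^k$ and solving for $R_{0,n}$ gives \eqref{constants}, with the upper limit in the sum reflecting the fact that $\tilde{R}_n$ is a polynomial of bounded degree. The only genuinely delicate point in the whole argument is the bookkeeping around the $k=1$ special case of Lemma \ref{LEM:int}, which forces the prime convention; aside from that, the derivation is a routine term-by-term coefficient comparison, and I do not anticipate any substantive obstacle.
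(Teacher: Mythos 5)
Your proposal is correct and follows essentially the same route as the paper's own proof: substitute the Chebyshev expansions into \eqref{TTR1}--\eqref{TTRn}, convert the integral term to coefficients via Lemma \ref{LEM:int}, match the $T_k(y)$ terms for $k\geqslant 1$ (with the $k=1$ anomaly of the integration formula absorbed into the prime convention), and fix the zeroth row from the boundary condition $\tilde{R}_n(-1)=0$. One small caveat on \eqref{constants}: since $\deg\tilde{R}_n=M+n+1$, the alternating sum should naturally run to $j=M+n+1$ rather than $M+1$, a point your phrase about ``bounded degree'' glosses over (and which the printed statement itself appears to elide).
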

\begin{proof}
Substituting into \eqref{TTR1} the Chebyshev series of $\tilde{R}_0(y)$ and 
$\tilde{R}_1(y)$ gives
\begin{equation}
\sum_{k=0}^{M+N+1} R_{k, 1} T_k(y) = \int_{-1}^{y}\left( \sum_{k=0}^{M+N+1} 
R_{k, 0} T_k(\ty) \right) \dty - \hspace{-3mm} \sum_{k=0}^{M+N+1} R_{k, 0} 
T_k(y),
\end{equation}
where $R_{k,0} = 0$ for $k>M+1$ and $R_{k,1} = 0$ for $k>M+2$. Replacing the 
integral term by its Chebyshev series using Lemma \ref{LEM:int} and matching the 
$T_k(y)$ terms for each $k \geqslant 1$ gives \eqref{R1}.

The recurrence relations \eqref{R2} and \eqref{Rn} can be derived similarly 
from \eqref{TTR2} and \eqref{TTRn}, respectively. The entries in the zeroth row 
are set using \eqref{constants} so that $\tilde{R}_{n}(-1) = 0$.
\end{proof}

\begin{figure}[t]
\centering
\includegraphics[scale=0.7]{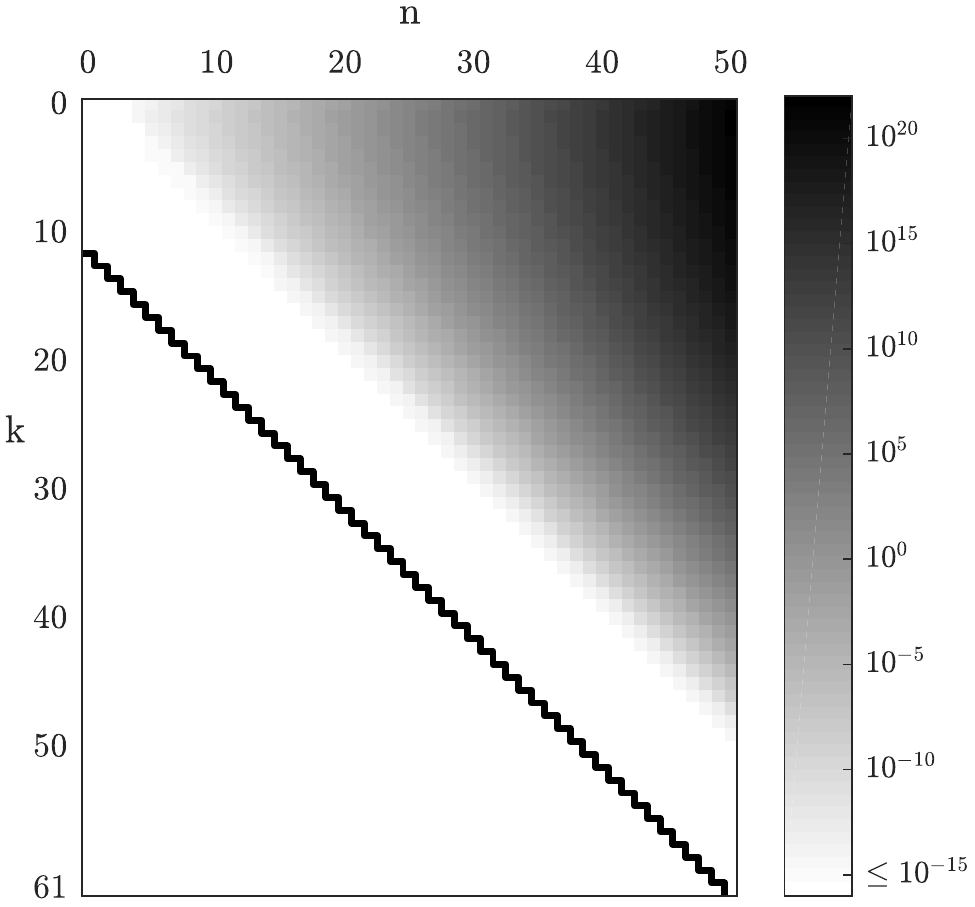}
\caption{Error growth when recursing using \eqref{Rn}. The entries of $R$ below 
the dark line are exactly zero.}
\label{FIG:unstable}
\end{figure}

The calculation of $R$ could have been as easy as suggested by Theorem 
\ref{THM:Rn}: calculate the zeroth column of $R$ following \eqref{R0} and 
recurse using \eqref{R1}, \eqref{R2}, and \eqref{Rn}. Unfortunately, \eqref{Rn} 
is not numerically stable even in absolute sense. 

\textbf{Example 1:} To see the instability, we take a randomly generated 
Chebyshev series of degree $10$, i.e. $f_{M} = \sum_{m=0}^{10} a_m T_m(x)$, with 
$|a_m| \leqslant 1$ and compare the entries in columns $1$ to $50$ calculated
recursively using Theorem \ref{THM:Rn} with the exact values computed 
symbolically using \textsc{Mathematica}. Figure \ref{FIG:unstable} shows 
the entrywise absolute error. The error in the entries above the main diagonal 
grows very rapidly, which is similar to what is observed in \cite{hal} for the 
Legendre case. In fact, the rounding errors in $R_{k-1, n}$ and $R_{k+1, n}$ 
in \eqref{Rn} are subject to an amplification by the factor $(n+1)/k$, which is 
larger than $1$ above the main diagonal. The recursion snowballs the errors 
introduced in each use of \eqref{Rn} very quickly, resulting in the computed 
values soon to become totally garbage. In the worst scenario, an error could be 
magnified $n$ folds in the $n$-th recursion and blows up at a rate of factorial. 
For instance, the absolute error in the entry at the top right corner of $R$ in 
Figure \ref{FIG:unstable}, i.e. $R_{0, 50}$, is $\bigO(10^{20})$, while the true 
value is about $10^{-3}$ in this example.

Indeed, \eqref{Rn} is only useful for calculating the entries on and below the 
main diagonal, that is, the entries in the region labeled by $A$ in Figure 
\ref{FIG:cheb_structure}.
To circumvent the instability, we make the following critical observation which 
is similar to the one made in \cite{hal} for the Legendre-based convolution 
matrices (see Section \ref{SEC:leg}).

\begin{theorem}[Symmetry of $R$] \label{THM:symT}
For $M+1 \leqslant k,n \leqslant N$,
\begin{equation}
R_{n,k} = \frac{(-1)^{n+k}k}{n}R_{k,n}. \label{symT}
\end{equation}
\end{theorem}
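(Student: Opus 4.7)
The plan is to recast the identity in a manifestly symmetric form and propagate it using the column recurrence of Theorem \ref{THM:Rn}. Let $\tau(i,j) := (-1)^i R_{i,j}/j$ for $j \geq 1$; a direct algebraic check shows that the claimed identity $R_{n,k} = (-1)^{n+k}(k/n)R_{k,n}$ is equivalent to the symmetry $\tau(n,k) = \tau(k,n)$, so the theorem reduces to proving this symmetry on $[M+1,N]^2$.

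For $k \geq M+2$ the term $\frac{2(-1)^n}{n-1}R_{k,0}$ in \eqref{Rn} vanishes (since $R_{k,0} = 0$ for $k > M+1$), and the prime on $R_{k-1,n}'$ is inactive for $k \geq 2$. Substituting $R_{i,j} = (-1)^i j\,\tau(i,j)$ into \eqref{Rn} and dividing by the common factor $(-1)^k(n+1)$ yields, for $k \geq M+2$ and $n \geq 2$,
\[
k\bigl[\tau(k,n+1) - \tau(k,n-1)\bigr] = n\bigl[\tau(k+1,n) - \tau(k-1,n)\bigr],
\]
a recurrence that is manifestly invariant under the swap $(k,n)\leftrightarrow(n,k)$. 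Consequently both $\tau$ and the reflected function $\tilde\tau(k,n) := \tau(n,k)$ obey the same recurrence, and so does the antisymmetric difference $\delta := \tau - \tilde\tau$. Combined with $\delta(k,k)=0$ (trivial) and with the vanishing of both $R_{k,n}$ and $R_{n,k}$ whenever $|k-n|>M+1$ (which follows from the expansion $R_{k,n} = \sum_m a_m [T_m \ast T_n]_k$ together with the band observation that $[T_m \ast T_n]_k = 0$ for $|k-n|>m+1$ and $k\geq 2$), the recurrence then propagates $\delta \equiv 0$ throughout the admissible range $[M+1,N]^2$.

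The main obstacle will be supplying enough seed data for this propagation to close---the recurrence is second-order in $n$, so a single off-diagonal strip of zero data is also needed---and in particular handling the edge row $k = M+1$, where $R_{M+1,0}$ is generically nonzero and the clean $\tau$-recurrence acquires an extra inhomogeneous term. I would address both issues by reducing, via linearity in $f_M = \sum_m a_m T_m$, to the per-monomial identity $[T_m \ast T_k]_n = (-1)^{n+k}(k/n)[T_m \ast T_n]_k$ for $k,n\geq m+1$, and then proving the per-monomial statement by induction on $m$: the base case $m=0$ follows directly from the explicit antiderivative formula of Lemma \ref{LEM:recT}, while the inductive step uses integration by parts with the derivative identity of the same lemma to express convolutions against $T_m$ in terms of convolutions involving $T_{m\pm 1}$ and $T_n'$, with $T_n'$ further expandable in Chebyshev polynomials of lower degree.
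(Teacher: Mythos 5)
Your central observation is the right one and is, at bottom, the same mechanism the paper exploits: after rescaling $R_{i,j}$ by $(-1)^i/j$, the homogeneous part of \eqref{Rn} (valid once $R_{k,0}=0$, i.e.\ for $k\geqslant M+2$) becomes form-invariant under swapping the two indices, and the rescaling is exactly the factor appearing in \eqref{symT}. The paper runs this same idea one level up: it posits $R^{(\alpha,\beta)}_{k,n}=r(n,k,\ua)\,R^{(\alpha,\beta)}_{n,k}$ for the Jacobi recurrence \eqref{PRn}, matches the five-term recurrence against its index-swapped version to obtain two first-order recurrences for $r$, solves them in closed form, and then recovers \eqref{symT} by letting $\alpha,\beta\to-1/2$ and using $P_n^{(-1/2,-1/2)}=\frac{(1/2)_n}{n!}T_n$. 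Your direct Chebyshev computation would, if completed, be a more self-contained proof of this particular case.

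As written, however, the argument does not close, and you have correctly diagnosed where. The recurrence for $\delta=\tau-\tilde\tau$ is only available where \emph{both} $k\geqslant M+2$ and $n\geqslant M+2$, while the theorem asserts \eqref{symT} down to $k=M+1$ and $n=M+1$, where the inhomogeneous term $\frac{2(-1)^n}{n-1}R_{k,0}$ survives; the diagonal $\delta(k,k)=0$ plus one strip is not evidently enough seed data for a second-order-in-$n$ propagation; and the vanishing of $R_{n,k}$ for $n-k>M+1$ in the \emph{upper} triangle, which you invoke as further seed data, is obtained in the paper as a \emph{consequence} of the symmetry (region $B$ is filled in from region $A$ via \eqref{symT}), so using it here requires an independent proof of the band statement $[T_m\ast T_n]_k=0$ for $1\leqslant k<n-m-1$, which you assert but do not supply. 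Your fallback, the per-monomial identity $[T_m\ast T_k]_n=(-1)^{n+k}(k/n)[T_m\ast T_n]_k$ proved by induction on $m$, is a genuinely different route (closer in spirit to the closed-form formulae of the companion work the authors cite and reject as intractable); its base case $m=0$ does check out against Lemma \ref{LEM:recT}, but the inductive step is only gestured at, and re-expanding $T_n'$ as a sum over all lower-degree Chebyshev polynomials of the same parity couples each $m$ to a full range of lower indices, so it is not clear the induction closes in the stated form. Either route could likely be completed, but each still needs a substantive missing piece; the paper's ratio-function argument avoids both difficulties at the cost of first proving the more general Jacobi symmetry.
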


We find it easy to prove Theorem \ref{THM:symT} by deducing it from the 
analogous result of the Jacobi-based convolution matrices and, therefore, 
defer the proof to Section \ref{SEC:jac}.

\begin{figure}[t]
\centering
\includegraphics[scale=0.3]{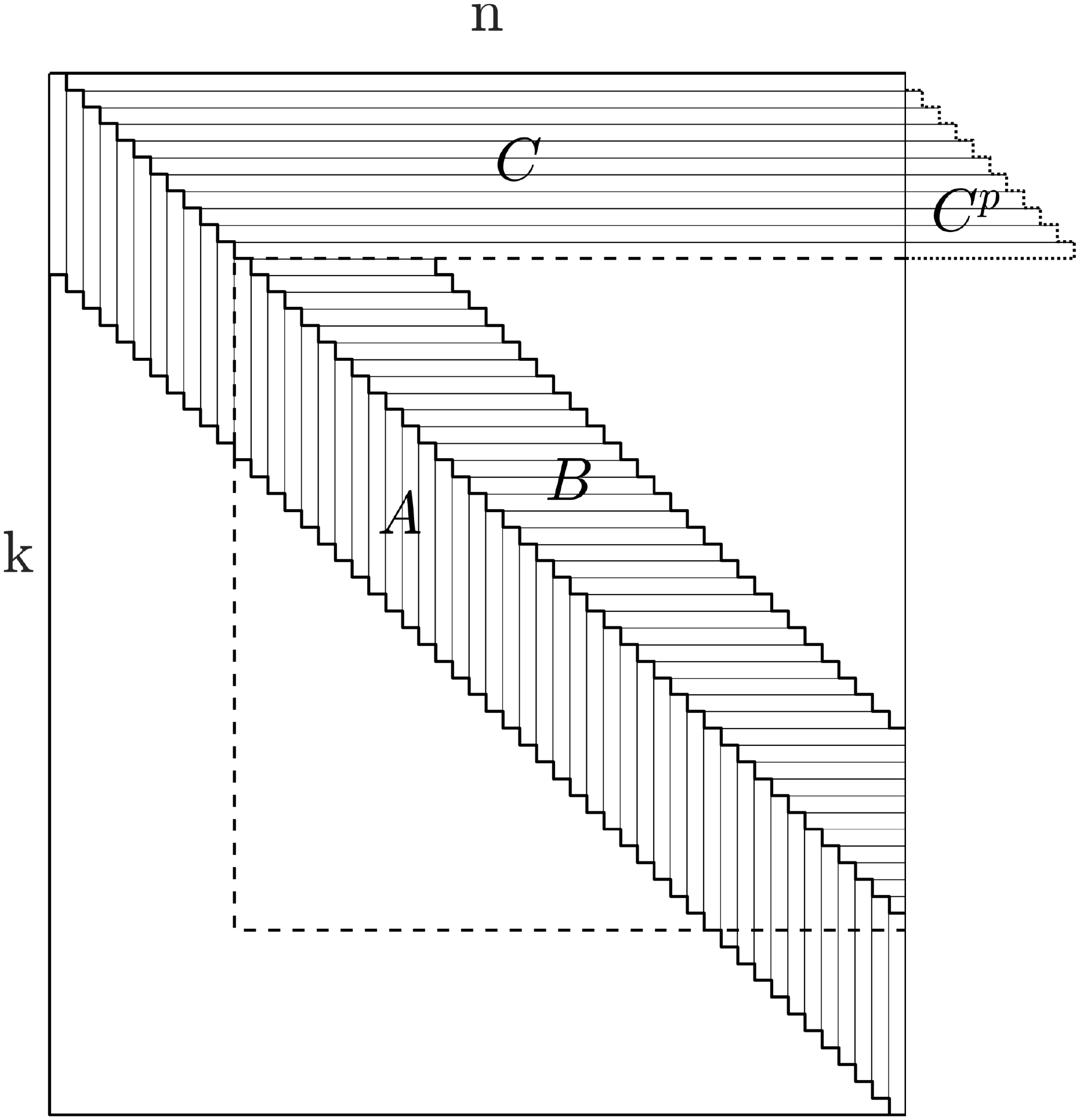}
\caption{The convolution matrix $R$ defined by an $M$-Chebyshev series is an 
almost-banded matrix with bandwidth $M+1$ plus $M+1$ rows at the top. 
The construction of $R$ starts in region $A$ (the main diagonal and the first 
$M+1$ subdiagonals) by recursion from the left to the right using \eqref{R1}, 
\eqref{R2}, and \eqref{Rn}. The entries in region $B$ (the first $M+1$ 
superdiagonals), are obtained by symmetry \eqref{symT}. With the padding region 
$C^p$, the recast recurrence relation \eqref{RnC} allows a stable recursion for 
entries in region $C$ (the top $M+1$ rows). All remaining entries are exact 
zeros.} \label{FIG:cheb_structure}
\end{figure}

Theorem \ref{THM:symT} shows the symmetry of $R$ up to a scaling factor, apart 
from the top and the bottom $M+1$ rows and the first $M+1$ columns. In Figure 
\ref{FIG:cheb_structure}, the symmetric part of $R$ is marked by the dashed 
lines. The important implications of this symmetry are (1) this $(N-M) \times 
(N-M)$ symmetric submatrix of $R$ is banded with bandwidth $M+1$; (2) the 
entries in region $B$ can be obtained stably and cheaply by rescaling their 
mirror images about the main diagonal; (3) the entries of the top $M+1$ rows of 
$R$, i.e. region $C$, can then be calculated by the same recurrence relation 
given by \eqref{Rn}.

When \eqref{Rn} is used to calculate the entries in the top $M+1$ rows, we 
rewrite it so that calculation is done by rows, going upward from the bottom of 
region $C$ to the top:
\begin{equation}
R''_{k-1, n}=-\frac{2k(-1)^n}{n^2-1}R_{k, 0} - \frac{k}{n-1}R_{k, n-1} 
+ \frac{k}{n+1} R_{k, n+1}+R_{k+1, n}, \label{RnC}
\end{equation}
where the double prime indicates that the term is halved when $k=1$. This new 
recurrence relation is numerically stable in region $C$. In contrast to 
\eqref{Rn}, the rounding errors are now premultiplied by $k/(n-1)$ or 
$k/(n+1)$, which are less than or, at most, equal to $1$ above the main 
diagonal. Therefore, the rounding errors are diminished in the course of 
recursion, rather than amplified.

It is worth noting that when recursing for the top $M+1$ rows using \eqref{RnC}, 
we have to start with entries beyond the first $N+1$ columns of $R$, so that all 
entries in the ``domain of dependence'' of the zeroth row are counted on. This 
suggests a triangle-shaped padding region, labeled by $C^p$ in Figure 
\ref{FIG:cheb_structure}.

We summarize the stable algorithm described above as follows:
\begin{algorithm}[h!]
\caption{Construction of the convolution matrix $R$} 
\label{ALGO:cheb}
% \begin{multicols}{2}
\begin{algorithmic}[1]
\STATE Construct the non-zero entries in the zeroth column $R_{:, 0}$ using 
\eqref{R0}.
\STATE Calculate the non-zero entries on and below the main diagonal (Region A) 
using \eqref{R1}, \eqref{R2}, and \eqref{Rn}.
\STATE Calculate the non-zero entries above the main diagonal (Region B) in 
rows $M+1$ to $N$ using \eqref{symT}.
\STATE Recurse for the entries above the main diagonal in the top $M+1$ rows 
(Region C) using \eqref{RnC}. 
\end{algorithmic}
% \columnbreak
% \begin{algorithmic}
% \STATE O(x)
% \end{algorithmic}
% \end{multicols}
\end{algorithm}

\textbf{Example 2:} Now we re-compute the same $R$ in Example 1 using Algorithm 
\ref{ALGO:cheb} and plot the entrywise absolute error in Figure \ref{FIG:cheb1}. 
With the stabilized algorithm, the maximum error across all the entries is now 
$2.12 \times 10^{-16}$ in this example. 

\textbf{Example 3:} In Figure \ref{FIG:cheb2}, we show a similar example with 
$M=1000$ and $N=5000$. Again, $\ua$ is generated randomly with $|a_m|\leqslant 
1$. The largest entrywise error across all the entries of $R$ is $1.28 \times 
10^{-15}$. 

\begin{figure}[t]
\centering
\begin{subfigure}[b]{0.45\textwidth}
\hspace{5mm}\includegraphics[scale=0.51]{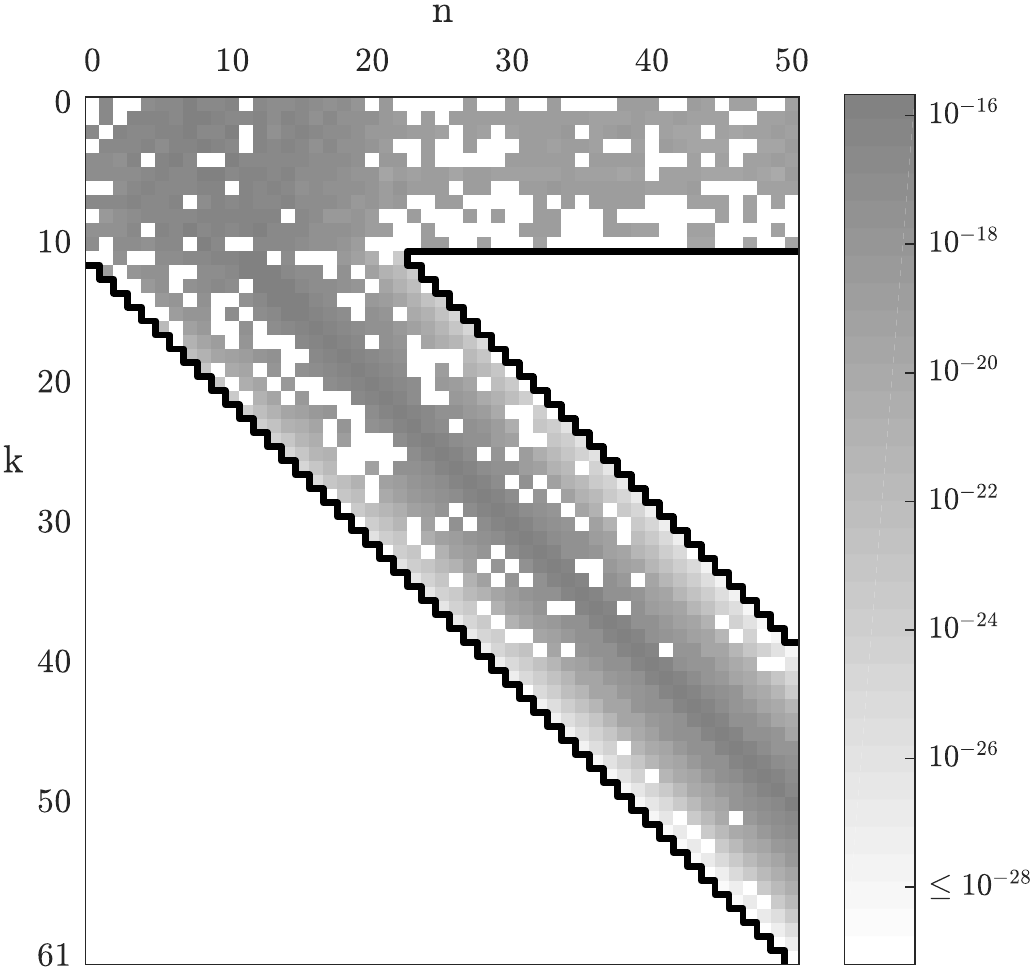}\caption{
$M=10$ , $N=50$.} \label{FIG:cheb1}
\end{subfigure}
\begin{subfigure}[b]{0.45\textwidth}
\hspace{3mm}\includegraphics[scale=0.5]{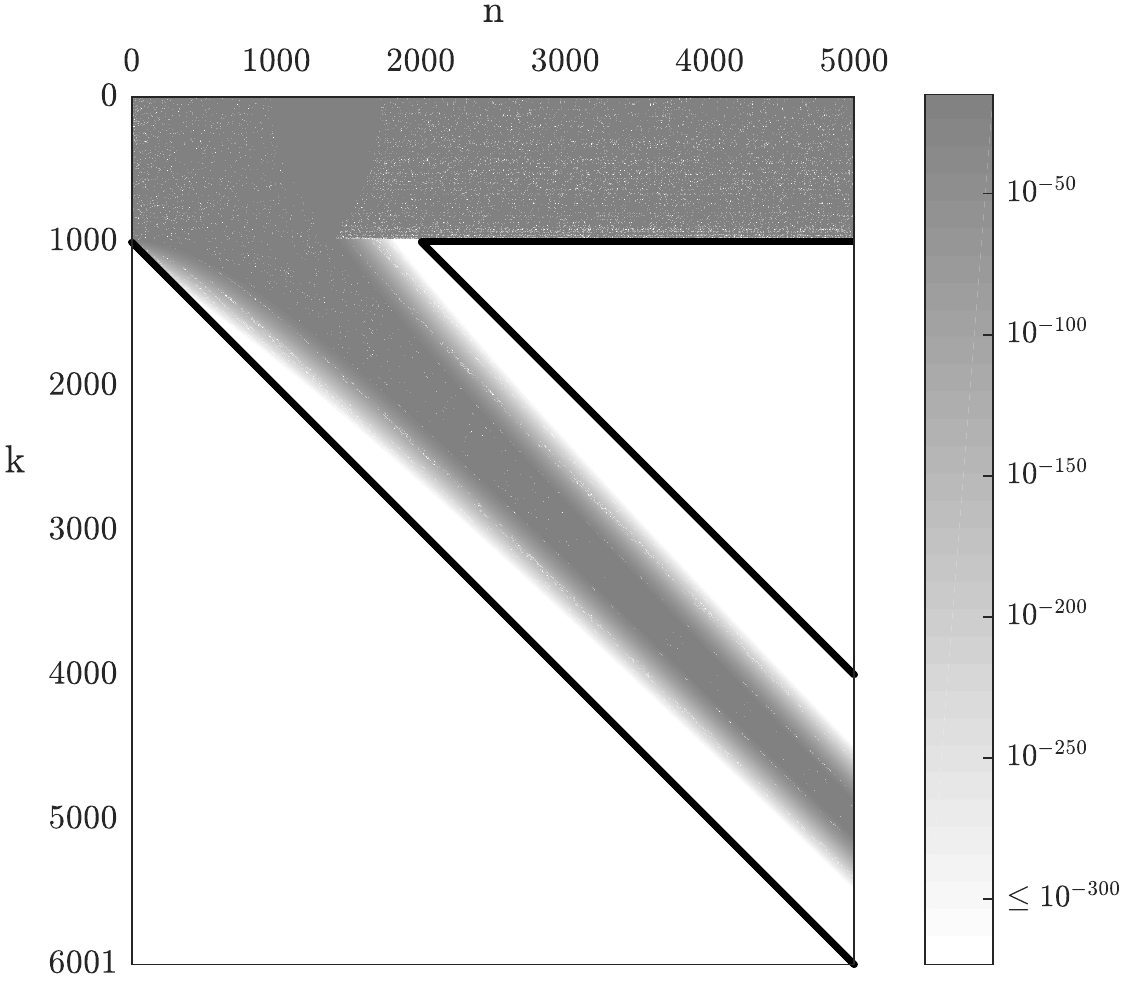}\caption{
$M=1000$, $N=5000$.}\label{FIG:cheb2}
\end{subfigure}
\caption{Entrywise error of the nonzero entries of $R$ obtained using Algorithm 
\ref{ALGO:cheb}. The dark lines circumscribe the region outside which entries 
are exact zeros.}\label{FIG:cheb}
\end{figure}

A curious observation we made in the last two examples and other experiments 
that we carried out is that the magnitudes of the entries in a convolution 
matrix have an enormous range of orders. In Example 2, even though the entries 
of $\ua$ are all $\bigO(10^{-1})$, the exact values obtained symbolically using 
\textsc{Mathematica} show that some entries of $R$ can be as small as
$\bigO(10^{-14})$. Therefore, it makes little sense to talk about the relative 
error of the computed entries, since we cannot expect to be able to compute 
$\bigO(10^{-14})$ values accurately in a relative sense by using 
$\bigO(10^{-1})$ data in floating point arithmetic. 

In Example 3, the magnitudes of the entries vary from $\bigO(10^{-1})$ to 
$\bigO(10^{p})$ with $p < -324$. The most minuscule entries are not even 
representable by the IEEE floating point arithmetic\footnote{The smallest 
subnormal number in the current IEEE floating point standard is $2 \times 
10^{-1074} \approx 4.94 \times 10^{-324}$.} \cite{ieee}. This also suggests that 
we should confine our discussion to absolute accuracy only.

Although the gargantuan discrepancy in the magnitudes of the entries denies any 
attempt to compute them accurately in a relative sense, the convolution matrices 
constructed using our stable algorithm give accurate approximations to 
the convolution operators in the absolute sense and work perfectly fine when 
used for calculating $h_{M+N+1}(x)$ or solving convolution integral equations, 
since it is also only sensible to discuss absolute accuracy in these cases. 

We close this section with a classic example from the renewal theory 
\cite[Example 1.4.3]{bru}, \cite{fel}. 

\textbf{Example 4:} It can be shown that the Volterra convolution integral 
equation
\begin{equation}
u(x) = f(x) + \int_0^x f(x-t) u(t) \: dt, \qquad x \in [0, 2],
\label{renewal}
\end{equation}
where the \textit{convolution kernel}
\begin{equation}
f(x) = \frac{1}{2} x^2 e^{-x},
\end{equation}
has a unique solution
\begin{equation}
u(x) = \frac{1}{3} - \frac{1}{3} \left(\cos\frac{\sqrt{3}}{2}x + \sqrt{3} 
\sin\frac{\sqrt{3}}{2}x\right)e^{-3x/2}.
\end{equation}

We first test the use of convolution matrices as a means of 
approximating the convolution function $h(x)$. To do so, we approximate $f(x)$ 
and $u(x)$ by Chebyshev series $f_M(x)$ and $u_N(x)$ of degrees $16$ and $17$, 
respectively, to machine precision uniformly on $[0, 2]$%%%%%%
\footnote{These optimal degrees are determined using the adaptive chopping 
algorithm \cite{aur} of \textsc{Chebfun} \cite{dri}.}%%%%%%
and then form the convolution matrix $R$ using the Chebyshev coefficients 
$\uc^f$ of $f_{M}(x)$. The product of $R$ and $u_N(x)$'s Chebyshev 
coefficients $\uc^u$ returns us the Chebyshev coefficients of $h_{M+N+1}(x) = 
(f*u)(x)$, which should be a good approximation of $u(x)-f(x)$. Indeed, the 
pointwise absolute error in $h_{M+N+1}(x)$ is displayed in Figure 
\ref{FIG:renewal1}, where the largest error is approximately $1.10 \times 
10^{-16}$.

Next, we take $u(x)$ as unknown and solve for $\uc^u$ with the knowledge of 
$f_M(x)$. This examines the use of convolution matrices in solving convolution 
integral equations. We construct the $(N+18)\times (N+1)$ convolution matrix 
for $N = 1, 3, 5, \ldots, 25$ and denote by $R^N$ the square matrix formed by 
the first $N+1$ rows. Solving
\begin{equation*}
(I - R^N) \uc^u = \uc^{fN}
\end{equation*}
gives us $\uc^u$, where $\uc^{fN}$ is a tailored version of $\uc^f$, either 
by truncation or zero-padding so that it is of length $N+1$. Figure 
\ref{FIG:renewal2} shows the maximum pointwise error of $u_N(x) = \sum_{n=0}^N 
c^u_nT_n(x)$ for increasing $N$. What we see is a \textit{spectral} convergence 
as the size of discretization increases. When $N=17$, the largest pointwise 
error in $[0, 2]$ decays to $1.39 \times 10^{-16}$, effectively of machine 
precision.

\begin{figure}[t]
\centering
\begin{subfigure}[b]{0.45\textwidth}
\includegraphics[scale=0.45]{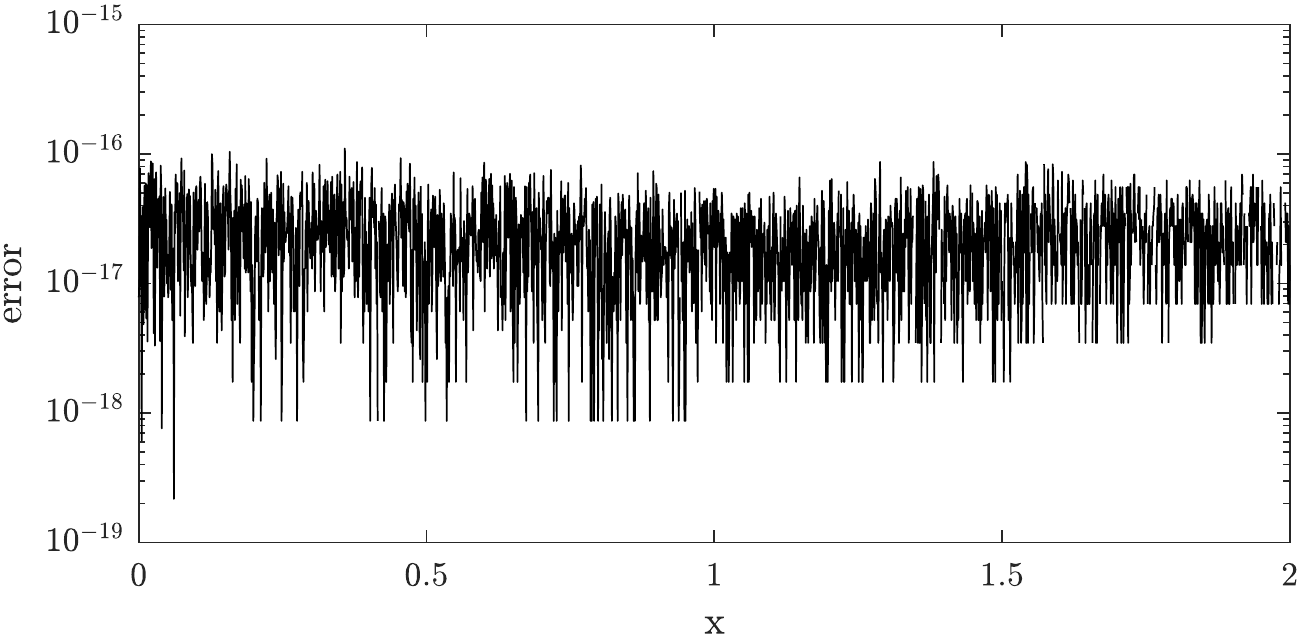}\caption{} 
\label{FIG:renewal1}
\end{subfigure}
~~
\begin{subfigure}[b]{0.45\textwidth}
\includegraphics[scale=0.45]{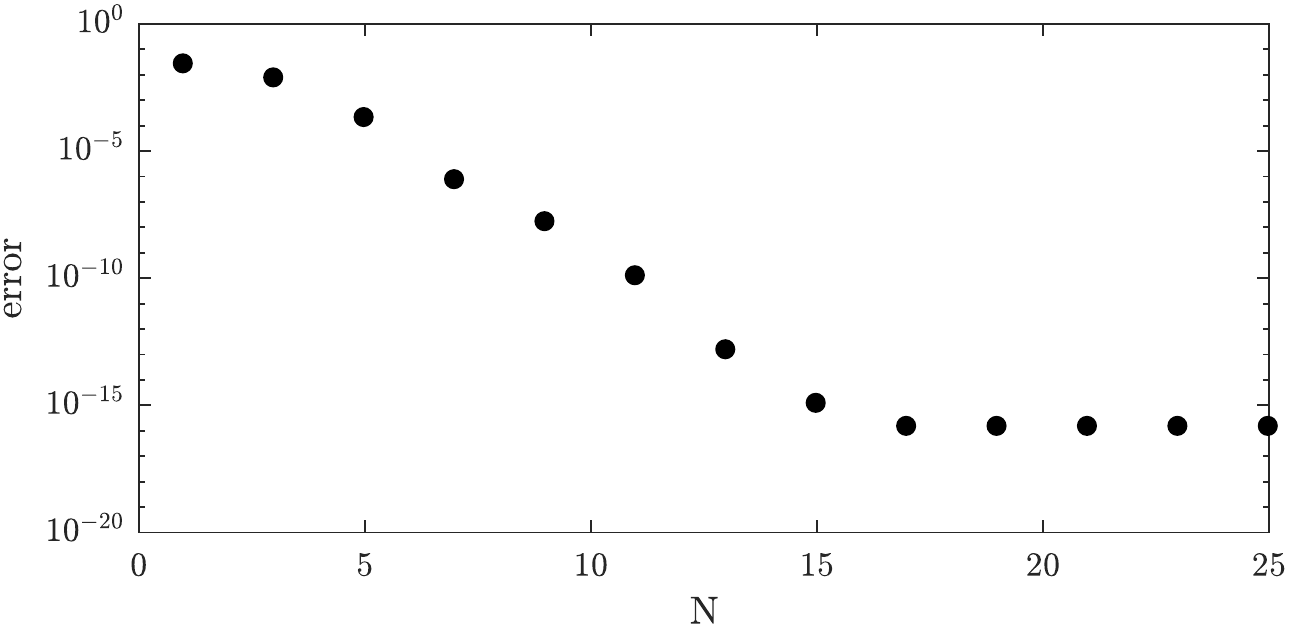}\caption{}\label{FIG:renewal2}
\end{subfigure}
\caption{An example from renewal theory: (a) The pointwise error of the 
computed Chebyshev approximant to the convolution integral on the right-hand 
side of \eqref{renewal}. (b) Spectral convergence of the computed approximant 
to $u(x)$ in \eqref{renewal} using convolution matrices of increasing sizes.}
\label{FIG:renewal}
\end{figure}

%%%%%%%%%%%%%%%%%%%%%%%%%%%%%%%%%%%%%%%%%%%%%%%%%%%%%%%%%%%%%%%%%%%%%%%%%%%%%%%%

\section{Other classical orthogonal polynomials} \label{SEC:other}
To derive the recurrence relations in Sections \ref{SEC:cheb} and 
\ref{SEC:algo}, we have only used the properties of Chebyshev polynomials that 
are also shared by other classical orthogonal polynomials. It is, therefore, 
natural to see how the results in the last two sections extend to Gegenbauer and 
Jacobi spaces. For convergence theory of Gegenbauer and Jacobi approximants, 
see, for example, \cite{wan2, wan, zha}.

\subsection{Convolution matrices in Gegenbauer space} \label{SEC:gegen}
Gegenbauer polynomials $C_n^{(\lambda)}(x)$, also known as ultraspherical 
polynomials, can be defined using the three-term recurrence relation 
\cite[\S 4.7]{sze}
\begin{equation}
2(n+\lambda) xC_{n}^{(\lambda)}(x) = (n+1) C_{n+1}^{(\lambda)}(x) +
(n+2\lambda-1) C_{n-1}^{(\lambda)}(x) \label{3termC}
\end{equation}
with $C_{-1}^{(\lambda)}(x)=0$ and $C_{0}^{(\lambda)}(x)=1$, under the 
constraints $\lambda > -1/2$ and $\lambda\neq 0$. The following lemma, parallel 
to Lemma \ref{LEM:recT}, can be derived using \eqref{3termC}.
\begin{lemma}
For any integer $n$, Gegenbauer polynomials satisfy the following recurrence 
relation that can be written in derivative or integral forms:
\begin{equation}
\begingroup
\arraycolsep=10pt
\begin{array}{l}
\begin{aligned}
2(n+\lambda) C_n^{(\lambda)}(x) = \frac{\md}{\md x} 
\left(C_{n+1}^{(\lambda)}(x) - C_{n-1}^{(\lambda)}(x)\right), \\
\int C_n^{(\lambda)}(t)\md t = \frac{1}{2(n+\lambda)} 
\left(C_{n+1}^{(\lambda)}(x)-C_{n-1}^{(\lambda)}(x)\right),
\end{aligned}
\end{array}
\endgroup
\label{recC}
\end{equation}
where $C^{\lambda}_{n}(x) = 0$ for $n < 0$.
\end{lemma}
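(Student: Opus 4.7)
The plan is first to note that the two parts of \eqref{recC} are equivalent: integrating the derivative form in $x$ gives the indefinite-integral form once the constant of integration is absorbed into the antiderivative on the left. So it suffices to prove
\[
2(n+\lambda)\,C_n^{(\lambda)}(x) \;=\; \frac{\md}{\md x}\bigl(C_{n+1}^{(\lambda)}(x) - C_{n-1}^{(\lambda)}(x)\bigr).
\]

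My approach is induction on $n$ driven by \eqref{3termC}. The base case $n=0$ is immediate since $C_{-1}^{(\lambda)}\equiv 0$ and $C_{1}^{(\lambda)}(x)=2\lambda x$, so both sides equal $2\lambda$. For the inductive step I would differentiate \eqref{3termC} in $x$ to get
\[
(n{+}1)\bigl(C_{n+1}^{(\lambda)}\bigr)'(x) = 2(n{+}\lambda)\,C_n^{(\lambda)}(x) + 2(n{+}\lambda)\,x\bigl(C_n^{(\lambda)}\bigr)'(x) - (n{+}2\lambda{-}1)\bigl(C_{n-1}^{(\lambda)}\bigr)'(x),
\]
and then use the inductive hypothesis to eliminate $\bigl(C_{n-1}^{(\lambda)}\bigr)'(x)$ in favor of $\bigl(C_{n+1}^{(\lambda)}\bigr)'(x)$ and $C_{n}^{(\lambda)}(x)$, closing the recursion.

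The main obstacle will be the stray $x\bigl(C_n^{(\lambda)}\bigr)'(x)$ term, which the single inductive hypothesis does not resolve. To dispose of it I would carry the companion identity $x\bigl(C_n^{(\lambda)}\bigr)'(x)-\bigl(C_{n-1}^{(\lambda)}\bigr)'(x)=n\,C_n^{(\lambda)}(x)$ in parallel through the same induction; substituting this back produces the claimed derivative identity after elementary algebra. A cleaner alternative is to invoke the two classical Gegenbauer identities $\bigl(C_n^{(\lambda)}\bigr)'(x)=2\lambda\,C_{n-1}^{(\lambda+1)}(x)$ and $C_n^{(\lambda+1)}(x)-C_{n-2}^{(\lambda+1)}(x)=\frac{n+\lambda}{\lambda}\,C_n^{(\lambda)}(x)$, from which a single substitution yields the lemma. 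Given that this identity is classical, the most economical exposition, in the spirit of the proof of Lemma \ref{LEM:recT}, is simply to cite a standard reference such as \cite[\S 4.7]{sze}.
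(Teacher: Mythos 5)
Your proposal is correct and, in its final recommendation, coincides with the paper's own proof, which simply cites \cite[\S 4.7]{sze}. The self-contained routes you sketch are also sound; in particular, once the companion identity $x\,\frac{\md}{\md x}C_n^{(\lambda)}(x)-\frac{\md}{\md x}C_{n-1}^{(\lambda)}(x)=n\,C_n^{(\lambda)}(x)$ is in hand, substituting it into the differentiated form of \eqref{3termC} yields the lemma directly and no induction is actually needed.
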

\begin{proof}
See \cite[\S 4.7]{sze}.
\end{proof}

We omit the proofs for the next two theorems as they are analogous to those of 
Theorems \ref{THM:T}, \ref{THM:R0}, and \ref{THM:Rn}.
\begin{theorem}[Recurrence of convolutions of Gegenbauer polynomials] 
\label{THM:C}
For Gegenbauer series $f_M(x) = \sum_{m=0}^M a_m C^{(\lambda)}_m(x)$,
\begin{equation}
\iy f_M(x-t) C_0^{(\lambda)}(t) \dt = \iy f_M(t) \dt \label{C0}
\end{equation}
and the convolutions of $f_M(x)$ and $C^{(\lambda)}_n(x)$ recurse:
\begin{equation}
\begin{multlined}
\iy f_M(x-t) C_{n+1}^{(\lambda)}(t)\dt = 2(n+\lambda) \iiy f_M(x-t) 
C_n^{(\lambda)}(t) \dd \\
+\iy f_M(x-t) C_{n-1}^{(\lambda)}(t)\dt + S^{(\lambda)}_n \iy f_M(t)\dt,
\end{multlined}\label{Cn}
\end{equation}
where $x \in [-2, 0]$, $y = x+1 \in [-1, 1]$ and
\begin{equation}
S^{(\lambda)}_n =\frac{2 (-1)^{n+1} (\lambda +n)(2\lambda-1)_n}{(n+1)!},
\label{SC}
\end{equation}
where $(\cdot)_n$ is the Pochhammer symbol for ascending factorial with $(a)_n 
\coloneqq a(a+1)(a+2)\cdots (a+n-1)$ and $(a)_0\coloneqq1$.
\end{theorem}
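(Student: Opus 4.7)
The equality \eqref{C0} is immediate since $C_0^{(\lambda)}\equiv 1$, so the substance of the theorem lies in \eqref{Cn}. The plan is to mirror \emph{verbatim} the proof of Theorem \ref{THM:T}, with Chebyshev polynomials replaced by Gegenbauer polynomials and \eqref{drecT} replaced by the derivative form of \eqref{recC}. Concretely, I first differentiate both $\ixx C_{n+1}^{(\lambda)}(x-t) C_m^{(\lambda)}(t)\,\dt$ and $\ixx C_{n-1}^{(\lambda)}(x-t) C_m^{(\lambda)}(t)\,\dt$ with respect to $x$ via the Leibniz integral rule. Each derivative produces the analogous integral of $(C_{n\pm 1}^{(\lambda)})'(x-t)$ against $C_m^{(\lambda)}(t)$, together with a boundary contribution $C_{n\pm 1}^{(\lambda)}(-1)\,C_m^{(\lambda)}(x+1)$.

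Subtracting these two identities and applying \eqref{recC} collapses the two derivatives into $2(n+\lambda) C_n^{(\lambda)}(x-t)$. All terms are polynomials in $x+1$, so integrating in $x+1$ from $-1$ to $y$ removes the outer derivative, while the convolutions on the left-hand side vanish at $x+1=-1$ because the range of integration there degenerates. Invoking commutativity via \eqref{comm} to swap the two arguments of each convolution, and then exploiting linearity in $m$ (multiplying by $a_m$ and summing over $m$), converts $C_m^{(\lambda)}$ into the series $f_M$. This yields exactly \eqref{Cn} with the coefficient $C_{n+1}^{(\lambda)}(-1) - C_{n-1}^{(\lambda)}(-1)$ appearing in front of $\iy f_M(t)\,\dt$.

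The only remaining task, and the one calculation that is not quite automatic, is to verify that this coefficient equals $S_n^{(\lambda)}$ as given in \eqref{SC}. Starting from the standard evaluation $C_k^{(\lambda)}(1) = (2\lambda)_k/k!$ together with the parity relation $C_k^{(\lambda)}(-x) = (-1)^k C_k^{(\lambda)}(x)$, one obtains $C_k^{(\lambda)}(-1) = (-1)^k (2\lambda)_k/k!$. Placing the difference over the common denominator $(n+1)!$ and factoring out $(2\lambda)_{n-1}$ reduces the bracketed expression to $(2\lambda+n-1)(2\lambda+n) - n(n+1)$, which, viewed as a quadratic in $\lambda$, factors as $2(2\lambda-1)(\lambda+n)$. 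Combining this with the elementary Pochhammer identity $(2\lambda-1)(2\lambda)_{n-1} = (2\lambda-1)_n$ delivers the form displayed in \eqref{SC}. The main obstacle is precisely this final Pochhammer simplification; everything else is a direct transcription of the Chebyshev argument to the Gegenbauer setting.
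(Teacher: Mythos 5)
Your proposal is correct and is exactly the argument the paper intends: the paper omits this proof, stating only that it is analogous to that of Theorem \ref{THM:T}, and your transcription of that Chebyshev argument (Leibniz rule, the derivative form of \eqref{recC}, integration in $x+1$, commutativity, and linearity) is the right one. The one piece of genuine content the paper leaves implicit, namely that the boundary constant $C_{n+1}^{(\lambda)}(-1)-C_{n-1}^{(\lambda)}(-1)$ equals $S_n^{(\lambda)}$ via $C_k^{(\lambda)}(-1)=(-1)^k(2\lambda)_k/k!$ and the factorization $(2\lambda+n-1)(2\lambda+n)-n(n+1)=2(2\lambda-1)(\lambda+n)$, you verify correctly.
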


\begin{theorem}[Construction of $R^{(\lambda)}$] \label{THM:CR}
The entries of the zeroth column of $R^{(\lambda)}$ are
\begin{subequations}
\begin{equation}
R^{(\lambda)}_{k,0} =
\begin{cases}
0 & \quad k > M+1, \\[2mm]
\displaystyle \frac{a_{k-1}}{2(k+\lambda-1)}-\frac{a_{k+1}}{2(k+\lambda+1)} & 
\quad 1 \leqslant k \leqslant M+1, \\[4mm]
\displaystyle \sum_{j=1}^{M+1} (-1)^{j+1}\frac{(2\lambda)_j}{j!} 
R^{(\lambda)}_{j,0} & \quad k = 0,
\end{cases}
\label{CR0}
\end{equation}
with $a_{M+1} = a_{M+2} = 0$. For $n \geqslant 0$,
\begin{equation}
R^{(\lambda)}_{k, n+1}= S^{(\lambda)}_nR^{(\lambda)}_{k, 0}+R^{(\lambda)}_{k, 
n-1} + \frac{n+\lambda}{k+\lambda-1} R^{(\lambda)}_{k-1, n} 
-\frac{n+\lambda}{k+\lambda+1}R^{(\lambda)}_{k+1, n},
\label{CRn}
\end{equation}
\end{subequations}
where $R^{(\lambda)}_{:, -1}$ are understood to be zeros.
\end{theorem}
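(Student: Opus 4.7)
The plan is to mimic the proofs of Theorems \ref{THM:R0} and \ref{THM:Rn}, replacing Chebyshev polynomials by Gegenbauer polynomials and Lemma \ref{LEM:int} by its Gegenbauer analogue. That analogue is an immediate consequence of the integral form of \eqref{recC}: integrating a Gegenbauer series $\phi(x) = \sum_j \alpha_j C_j^{(\lambda)}(x)$ term by term gives antiderivative coefficients $\tilde{\alpha}_k = \alpha_{k-1}/[2(k+\lambda-1)] - \alpha_{k+1}/[2(k+\lambda+1)]$ for $k \geqslant 1$, with a constant determined by the lower limit of integration.

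For the zeroth column, I would start from \eqref{C0}, which reads $\tilde{R}_0^{(\lambda)}(y) = \int_{-1}^y f_M(t)\,\dt$. Expanding $f_M = \sum_{m=0}^M a_m C_m^{(\lambda)}$ and applying the integration formula above produces the row $1 \leqslant k \leqslant M+1$ of \eqref{CR0} directly, using the convention $a_{M+1} = a_{M+2} = 0$. The top entry $R_{0,0}^{(\lambda)}$ is then pinned down by the boundary condition $\tilde{R}_0^{(\lambda)}(-1) = 0$: substituting the Gegenbauer expansion at $y=-1$ and invoking the classical identity $C_k^{(\lambda)}(-1) = (-1)^k (2\lambda)_k/k!$ (see \cite[\S 4.7]{sze}) isolates $R_{0,0}^{(\lambda)}$ as the sum in the third line of \eqref{CR0}.

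For the recurrence \eqref{CRn}, I would substitute the Gegenbauer expansions of $\tilde{R}_{n+1}^{(\lambda)}$, $\tilde{R}_n^{(\lambda)}$, $\tilde{R}_{n-1}^{(\lambda)}$, and $\tilde{R}_0^{(\lambda)}$ into the function-level recurrence \eqref{Cn}. The only nontrivial term is $\int_{-1}^y \tilde{R}_n^{(\lambda)}(\ty)\,\dty$; by the Gegenbauer integration formula, its $C_k^{(\lambda)}(y)$ coefficient for $k \geqslant 1$ is $R_{k-1,n}^{(\lambda)}/[2(k+\lambda-1)] - R_{k+1,n}^{(\lambda)}/[2(k+\lambda+1)]$. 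Multiplying by $2(n+\lambda)$ and matching $C_k^{(\lambda)}(y)$ coefficients on both sides of \eqref{Cn} for each $k \geqslant 1$ then delivers \eqref{CRn}. The boundary case $n=0$ is absorbed by the stated convention $R^{(\lambda)}_{:,-1} \equiv 0$, consistent with the absent $C_{n-1}^{(\lambda)}$ term in \eqref{Cn} when $n=0$.

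The main obstacle, exactly as in the Chebyshev case, is bookkeeping the constants of integration that arise from the indefinite integrals. These can be eliminated either by phrasing the integrals in \eqref{Cn} as definite integrals from $-1$ to $y$ (so that the constants cancel, the left-hand side vanishing at $y=-1$), or equivalently by treating the zeroth row separately via the boundary condition $\tilde{R}_n^{(\lambda)}(-1) = 0$, which mirrors \eqref{constants} in the Chebyshev setting and produces an analogous closed-form sum for $R_{0,n}^{(\lambda)}$. Once this accounting is in place, all remaining manipulations are mechanical analogues of the Chebyshev derivations.
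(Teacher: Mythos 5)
Your proposal is correct and follows exactly the route the paper intends: the paper explicitly omits the proof of Theorem \ref{THM:CR} as "analogous to those of Theorems \ref{THM:T}, \ref{THM:R0}, and \ref{THM:Rn}," and your argument is precisely that analogy carried out — term-by-term integration via \eqref{recC}, the boundary condition $\tilde{R}_n^{(\lambda)}(-1)=0$ with $C_k^{(\lambda)}(-1)=(-1)^k(2\lambda)_k/k!$ for the zeroth row, and coefficient matching in \eqref{Cn} for \eqref{CRn}. The details all check out (note only that, unlike the Chebyshev case, no special "primed" coefficient is needed at $k=1$ since the Gegenbauer integration formula has no exceptional index for $\lambda\neq 0$).
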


Similar to the Chebyshev case, the Gegenbauer-based convolution matrices 
are also almost-banded with a symmetric submatrix. 
\begin{theorem}[Symmetry of $R^{(\lambda)}$] \label{THM:symC}
For $M+1 \leqslant k,n \leqslant N$,
\begin{equation}
R^{(\lambda)}_{k,n} = (-1)^{k+n}\frac{k+\lambda}{n+\lambda}R^{(\lambda)}_{n,k}. 
\label{symC}
\end{equation}
\end{theorem}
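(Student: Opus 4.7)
The plan is to postpone the proof until Section \ref{SEC:jac} and then obtain Theorem \ref{THM:symC} as a direct specialization of the analogous symmetry result for the Jacobi convolution matrix $R^{(\alpha,\beta)}$. This is natural because Gegenbauer polynomials are Jacobi polynomials with equal parameters, $C_n^{(\lambda)}(x) = \bigl((2\lambda)_n / (\lambda + 1/2)_n\bigr)\,P_n^{(\lambda - 1/2,\,\lambda - 1/2)}(x)$, so the Gegenbauer coefficients of $f_M$ and of each column $(f_M \ast C_n^{(\lambda)})(y)$ differ from their Jacobi counterparts only by the diagonal scalings $\gamma_n^{(\lambda)} := (2\lambda)_n/(\lambda+1/2)_n$ acting on rows and columns. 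Consequently, any row/column symmetry of $R^{(\alpha,\beta)}$ transfers verbatim to $R^{(\lambda)}$ after compensating for these scalars.

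Once the Jacobi analogue, a relation of the form $R^{(\alpha,\beta)}_{k,n} = \sigma^{(\alpha,\beta)}_{k,n}\,R^{(\alpha,\beta)}_{n,k}$ valid for $M+1 \leqslant k,n \leqslant N$, is established in Section \ref{SEC:jac}, I would set $\alpha = \beta = \lambda - 1/2$, conjugate by the ratio $\gamma_k^{(\lambda)}/\gamma_n^{(\lambda)}$, and simplify the resulting Pochhammer expression. The telescoping identity $\gamma_{n+1}^{(\lambda)}/\gamma_n^{(\lambda)} = (2\lambda+n)/(n+\lambda+1/2)$, together with standard Pochhammer manipulations, should collapse the combined factor to $(-1)^{k+n}(k+\lambda)/(n+\lambda)$, exactly matching \eqref{symC}. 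The sign $(-1)^{k+n}$ will originate from boundary evaluations $P_n^{(\alpha,\beta)}(-1) = (-1)^n \binom{n+\beta}{n}$ that enter the Jacobi derivation via integration by parts.

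The real work, and therefore the main obstacle, is establishing the Jacobi symmetry itself. My strategy for that is the following. In the interior region $k,n \geqslant M+2$, the zeroth-column term $R^{(\alpha,\beta)}_{k,0}$ vanishes, so the Jacobi analogue of \eqref{CRn} reduces to a homogeneous four-term recursion whose coefficients are rational in $k$ and $n$. I would define the auxiliary array $T^{(\alpha,\beta)}_{k,n} := \sigma^{(\alpha,\beta)}_{k,n}\,R^{(\alpha,\beta)}_{n,k}$, verify by direct algebraic substitution that $T$ satisfies the same four-term recurrence as $R^{(\alpha,\beta)}$ throughout this interior, and then match initial data along the main diagonal and its first $M+1$ subdiagonals. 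The tricky part is the boundary $k = M+1$ (and symmetrically $n = M+1$), where $R^{(\alpha,\beta)}_{k,0}$ is nonzero and the recursion carries an inhomogeneity; I expect this case to require a separate integration-by-parts identity applied to $\ixx f_M(x-t)\,P_n^{(\alpha,\beta)}(t)\,\md t$ tested against $P_k^{(\alpha,\beta)}$ under the Jacobi weight $(1-t)^{\alpha}(1+t)^{\beta}$, so that the boundary contributions cancel in a manner consistent with the postulated scaling $\sigma^{(\alpha,\beta)}_{k,n}$.
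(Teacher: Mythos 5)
Your proposal follows essentially the same route as the paper: the Gegenbauer symmetry is deduced from the Jacobi symmetry \eqref{symP} by setting $\alpha=\beta=\lambda-\tfrac12$ and conjugating by the scaling $(2\lambda)_n/(\lambda+\tfrac12)_n$ relating $C_n^{(\lambda)}$ to $P_n^{(\lambda-1/2,\lambda-1/2)}$, and the Jacobi symmetry itself is obtained from the homogeneous recurrence valid where $R^{(\alpha,\beta)}_{k,0}=0$ together with a multiplicative ansatz anchored at the diagonal. The only cosmetic differences are that the paper pins down the ratio $r(n,k,\ua)$ by matching the recurrence against its $k\leftrightarrow n$ transpose and telescoping --- so the sign $(-1)^{k+n}$ emerges from the negativity of the $C^{(\alpha,\beta)}_j$ coefficients in the resulting product rather than from boundary evaluations of $P_n^{(\alpha,\beta)}$ --- which is equivalent to your auxiliary-array verification.
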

Again, we defer the proof to Section \ref{SEC:jac}. 

Same stability issue occurs if Gegenbauer-based convolution matrices are 
constructed naively using \eqref{CRn}. The stable algorithm is given in 
Algorithm \ref{ALGO:geja}. Analogously, we need to recast \eqref{CRn} before it 
can be used for calculating the entries above the main diagonal:
\begin{equation}
R^{(\lambda)}_{k-1, n}= \frac{k+\lambda-1}{n+\lambda} \left(-S^{(\lambda)}_n 
R^{(\lambda)}_{k, 0}+R^{(\lambda)}_{k, n+1} - R^{(\lambda)}_{k,n-1} \right) 
+\frac{k+\lambda-1}{k+\lambda+1}R^{(\lambda)}_{k+1, n}. \label{CRnC}
\end{equation}

\subsubsection{Convolution matrices in Legendre space} \label{SEC:leg}
Gegenbauer polynomials reduce to Legendre polynomials $P_n(x)$ when $\lambda 
= 1/2$, i.e. $P_n(x) = C_n^{(1/2)}(x)$. In this case,  $S^{(1/2)}_n = 0$ , 
annihilating the last term in \eqref{Cn} and the first term on the right-hand 
side of \eqref{CRn}. Now \eqref{CRn} reduces to a four-term recurrence relation
\begin{equation}
R^{(1/2)}_{k, n+1}= R^{(1/2)}_{k, n-1} + \frac{2n+1}{2k-1} R^{(1/2)}_{k-1, n} - 
\frac{2n+1}{2k+3} R^{(1/2)}_{k+1, n}, \label{LR}
\end{equation}
which is exactly the one found in \cite{hal} via spherical Bessel functions. 

In the absence of the $R^{(\lambda)}_{k, 0}$ term, symmetry \eqref{symC} extends 
beyond the submatrix $R^{(\lambda)}_{M+1:N, M+1:N}$ as the entire matrix is 
symmetric up to a scaling factor. Hence, Legendre-based convolution matrices 
are exactly banded with bandwidth $M+1$ and this is the \textit{only} 
case where polynomial-based convolution matrices are exactly banded. 

\subsection{Convolution matrices in Jacobi space} \label{SEC:jac}
For Jacobi polynomials, we adopt the most commonly-used normalization, which 
can be found, for example, in \cite[\S 4.2.1]{sze}. In terms of 
hypergeometric function, they are
\begin{equation}
P_n^{(\alpha,\beta)}(x) = \frac{(\alpha+1)_n}{n!} {}_2F_{1}\left( 
\begin{array}{c}-n, n+\alpha+\beta+1 \\ \alpha+1 \end{array}; 
\frac{1-x}{2}\right), ~~~ n \geqslant 0,
\end{equation}
for $\alpha, \beta > -1$.

We first introduce a similar recurrence for the derivatives and the integrals of 
Jacobi polynomials, analogous to \eqref{recT} for Chebyshev polynomials and 
\eqref{recC} for Gegenbauer polynomials, but with one extra term.

\begin{lemma} For any integer $n$, Jacobi polynomials $P^{(\alpha, 
\beta)}_n(x)$ satisfy the following recurrence relation which can be written in 
derivative or integral forms:
\begin{subequations}
\begin{align}
~~~~~~P_{n}^{(\alpha,\beta)}(x) =  
A^{(\alpha,\beta)}_{n+1}\frac{\md}{\md x} 
P_{n+1}^{(\alpha,\beta)}(x) + B^{(\alpha,\beta)}_{n} \frac{\md}{\md x} 
P_{n}^{(\alpha,\beta)}(x) + C^{(\alpha,\beta)}_{n-1} \frac{\md}{\md x} 
P_{n-1}^{(\alpha,\beta)}(x), \label{drecP} \\
\int P_{n}^{(\alpha,\beta)}(x) \md x = A^{(\alpha,\beta)}_{n+1}
P_{n+1}^{(\alpha,\beta)}(x) + B^{(\alpha,\beta)}_{n} P_{n}^{(\alpha,\beta)}(x) 
+ C^{(\alpha,\beta)}_{n-1} P_{n-1}^{(\alpha,\beta)}(x), \label{irecP}
\end{align}
where
\begin{align}
A^{(\alpha,\beta)}_{n+1} &= 
\frac{2(\alpha+\beta+n+1)}{(\alpha+\beta+2n+1)(\alpha+\beta+2n+2)}, \\
B^{(\alpha,\beta)}_{n} &= \frac{2(\alpha-\beta)}{(\alpha+\beta+2n)(\alpha
+\beta+2n+2)}, \label{ABC} \\
C^{(\alpha,\beta)}_{n-1} &= -\frac{2(\alpha+n)(\beta+n)}{(\alpha+\beta+n) 
(\alpha+\beta+2n)(\alpha+\beta+2n+1)}.
\end{align}
\label{recP}
\end{subequations}
Here, we assume $P_{n}^{(\alpha,\beta)}(x) = 0$ for $n<0$.
\end{lemma}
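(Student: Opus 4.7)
The plan is to deduce \eqref{drecP} from two classical facts about Jacobi polynomials and then obtain \eqref{irecP} by a single antidifferentiation. The first fact is the derivative rule
\[
\frac{\md}{\md x}P_n^{(\alpha,\beta)}(x) \;=\; \tfrac{1}{2}(n+\alpha+\beta+1)\,P_{n-1}^{(\alpha+1,\beta+1)}(x),
\]
which lets each derivative on the right-hand side of \eqref{drecP} be re-expressed as a Jacobi polynomial with both parameters shifted up by one. The second fact is a contiguous-parameter expansion
\[
P_n^{(\alpha,\beta)}(x) \;=\; \mathsf{a}_n\,P_n^{(\alpha+1,\beta+1)}(x) + \mathsf{b}_n\,P_{n-1}^{(\alpha+1,\beta+1)}(x) + \mathsf{c}_n\,P_{n-2}^{(\alpha+1,\beta+1)}(x).
\]

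The structural reason the expansion truncates at three terms is that, for any $k \leqslant n-3$,
\[
\int_{-1}^1 P_n^{(\alpha,\beta)}(x)\,P_k^{(\alpha+1,\beta+1)}(x)\,(1-x)^{\alpha+1}(1+x)^{\beta+1}\,\md x = \int_{-1}^1 P_n^{(\alpha,\beta)}(x)\,q(x)\,(1-x)^\alpha(1+x)^\beta\,\md x,
\]
with $q(x) = (1-x^2)\,P_k^{(\alpha+1,\beta+1)}(x)$ a polynomial of degree $k+2 \leqslant n-1$, which vanishes by orthogonality of $P_n^{(\alpha,\beta)}$ on $[-1,1]$ against the weight $(1-x)^\alpha(1+x)^\beta$. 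The three surviving coefficients $\mathsf{a}_n$, $\mathsf{b}_n$, $\mathsf{c}_n$ can then be pinned down by matching leading coefficients and by evaluating at $x = \pm 1$, using the standard values $P_n^{(\alpha,\beta)}(1) = \binom{n+\alpha}{n}$ and $P_n^{(\alpha,\beta)}(-1) = (-1)^n\binom{n+\beta}{n}$.

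Combining the two facts, I would substitute the derivative rule into the contiguous expansion, writing each $P_k^{(\alpha+1,\beta+1)}$ as $\tfrac{2}{k+\alpha+\beta+2}\frac{\md}{\md x}P_{k+1}^{(\alpha,\beta)}$, and then collect terms to identify $A^{(\alpha,\beta)}_{n+1}$, $B^{(\alpha,\beta)}_n$, and $C^{(\alpha,\beta)}_{n-1}$. Antidifferentiating \eqref{drecP} then yields \eqref{irecP} immediately, with the constant of integration absorbed into the indefinite integral on the left-hand side.

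The main obstacle is purely algebraic bookkeeping: the expressions in \eqref{ABC} involve several factors of the form $\alpha+\beta+2n+c$, and matching them against $\tfrac{2}{k+\alpha+\beta+2}\mathsf{a}_n$ and the analogous products requires careful Pochhammer-symbol manipulation, with particular attention to the edge cases $n = 0, 1$ where the $P_{n-2}^{(\alpha+1,\beta+1)}$ term should drop out. Two limit checks are quick and would pin down any sign or prefactor slip: setting $\alpha=\beta=-\tfrac{1}{2}$ should collapse \eqref{drecP} to \eqref{drecT} after adjusting for the Chebyshev/Jacobi normalization ratio, while setting $\alpha=\beta=\lambda-\tfrac{1}{2}$ should force $B^{(\alpha,\beta)}_n = 0$ and recover \eqref{recC}.
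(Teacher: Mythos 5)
Your proposal is correct, and it is worth noting that the paper does not actually prove this lemma at all---its ``proof'' is a one-line citation to \cite[Theorem 3.23]{she}---so you are supplying a derivation where the authors defer to the literature. Your route is the standard one and it does work: the parameter-raising identity $\frac{\md}{\md x}P_n^{(\alpha,\beta)}=\tfrac{1}{2}(n+\alpha+\beta+1)P_{n-1}^{(\alpha+1,\beta+1)}$ converts \eqref{drecP} into the connection formula expressing $P_n^{(\alpha,\beta)}$ in the basis $\{P_k^{(\alpha+1,\beta+1)}\}$, and your orthogonality argument (absorbing the factor $1-x^2$ into the weight to reduce the degree count) correctly shows that only $k=n,n-1,n-2$ survive. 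Matching leading coefficients gives $\mathsf{a}_n=(n+\alpha+\beta+1)(n+\alpha+\beta+2)/\bigl((2n+\alpha+\beta+1)(2n+\alpha+\beta+2)\bigr)$, which is consistent with $A^{(\alpha,\beta)}_{n+1}=2\mathsf{a}_n/(n+\alpha+\beta+2)$ as given in \eqref{ABC}, and the remaining two coefficients are indeed pinned down by evaluation at $x=\pm1$; you should just record that the resulting $2\times2$ system is nonsingular for $\alpha,\beta>-1$ (the two generalized binomial values carry opposite sign patterns, so the determinant is a sum of positive terms up to an overall sign), or alternatively read $\mathsf{b}_n,\mathsf{c}_n$ off directly from the orthogonality inner products with the known norms. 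Your closing sanity checks are apt: $\alpha=\beta$ kills $B_n^{(\alpha,\beta)}$ and recovers \eqref{recC} after the Gegenbauer--Jacobi rescaling, exactly as the paper observes. What the paper's citation buys is brevity; what your argument buys is a self-contained, elementary proof whose only inputs are the derivative shift, orthogonality, and endpoint values.
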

\begin{proof}
See, for example, \cite[Theorem 3.23]{she}.
\end{proof}

Different from \eqref{recT} or \eqref{recC}, \eqref{drecP} and \eqref{irecP} 
both have a middle term on the right-hand side, indexed with $n$. In the 
symmetric case when $\alpha = \beta$, $B_n^{(\alpha, \beta)}$ becomes zero and 
this middle term vanishes. 

Analogous to Theorem \ref{THM:T} and Theorem \ref{THM:C}, a recurrence relation 
for the convolutions of a Jacobi series with Jacobi polynomials can be derived 
using \eqref{drecP}.

\begin{theorem}[Recurrence of convolutions of Jacobi polynomials]
For Jacobi series $f_M(x) = \sum_{m=0}^M a_m P^{(\alpha, \beta)}_m(x)$,
\begin{equation}
\begin{multlined}
\iy f_M(x-t) P^{(\alpha, \beta)}_{n+1}(t) \dt = -\frac{1}{A^{(\alpha, 
\beta)}_{n+1}} \iiy f_M(x-t) P^{(\alpha, \beta)}_n(t) \dd \\
-\frac{B^{(\alpha, \beta)}_{n}}{A^{(\alpha, \beta)}_{n+1}} \iy f_M(x-t) 
P^{(\alpha, \beta)}_n(t) \dt -\frac{C^{(\alpha, \beta)}_{n-1}}{A^{(\alpha, 
\beta)}_{n+1}} \iy f_M(x-t) P^{(\alpha, \beta)}_{n-1}(t) \dt \\
+ \frac{S^{(\alpha, \beta)}_n}{A^{(\alpha, \beta)}_{n+1}}\iy f_M(t) \dt,
\end{multlined}\label{Pn}
\end{equation}
where $x \in [-2, 0]$, $y = x+1 \in [-1, 1]$, and
\begin{equation}
S^{(\alpha, \beta)}_n = 
\frac{2(-1)^{n+1}(\beta)_{n+1}}{(\alpha+\beta+n)(n+1)!}. \label{SP}
\end{equation}
\end{theorem}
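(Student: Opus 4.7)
The plan is to mirror the argument of Theorem \ref{THM:T} for the Chebyshev case, with the three-term derivative identity \eqref{drecP} now playing the role of the two-term \eqref{drecT}. First I solve \eqref{drecP} for the highest-index derivative to write
\[
\frac{\md}{\md x}P_{n+1}^{(\alpha,\beta)}(x) = \frac{1}{A^{(\alpha,\beta)}_{n+1}}\,P_n^{(\alpha,\beta)}(x) - \frac{B^{(\alpha,\beta)}_{n}}{A^{(\alpha,\beta)}_{n+1}}\frac{\md}{\md x}P_n^{(\alpha,\beta)}(x) - \frac{C^{(\alpha,\beta)}_{n-1}}{A^{(\alpha,\beta)}_{n+1}}\frac{\md}{\md x}P_{n-1}^{(\alpha,\beta)}(x).
\]
Replacing $x$ by $x-t$, multiplying by $P_m^{(\alpha,\beta)}(t)$, and integrating over $t \in [-1, x+1]$ will feed this identity directly into the convolution integrals.

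Let $I_k(x) := \ixx P_k^{(\alpha,\beta)}(x-t)P_m^{(\alpha,\beta)}(t)\dt$. Leibniz's integral rule gives
\[
I_k'(x) = P_k^{(\alpha,\beta)}(-1)\,P_m^{(\alpha,\beta)}(x+1) + \ixx \frac{\md}{\md x}P_k^{(\alpha,\beta)}(x-t)\,P_m^{(\alpha,\beta)}(t)\dt.
\]
Applying this with $k = n+1$ and substituting the above derivative identity inside the integrand, then applying Leibniz once more to eliminate the freshly produced primed integrands for $k = n$ and $k = n-1$ in favour of $I_n'(x)$ and $I_{n-1}'(x)$, yields
\[
I_{n+1}'(x) = \frac{1}{A^{(\alpha,\beta)}_{n+1}}\,I_n(x) - \frac{B^{(\alpha,\beta)}_{n}}{A^{(\alpha,\beta)}_{n+1}}\,I_n'(x) - \frac{C^{(\alpha,\beta)}_{n-1}}{A^{(\alpha,\beta)}_{n+1}}\,I_{n-1}'(x) + \mathcal{B}_n\,P_m^{(\alpha,\beta)}(x+1),
\]
where $\mathcal{B}_n := P_{n+1}^{(\alpha,\beta)}(-1) + (B^{(\alpha,\beta)}_n/A^{(\alpha,\beta)}_{n+1})P_n^{(\alpha,\beta)}(-1) + (C^{(\alpha,\beta)}_{n-1}/A^{(\alpha,\beta)}_{n+1})P_{n-1}^{(\alpha,\beta)}(-1)$ is a concrete scalar depending only on $\alpha, \beta, n$.

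Next I integrate this identity with respect to $x+1$ from $-1$ up to $y \in [-1, 1]$: since $I_k(-2) = 0$ for every $k$, the primitives of $I_k'(x)$ evaluate cleanly to $I_k(y-1)$, and the lone source term contributes $\mathcal{B}_n \iy P_m^{(\alpha,\beta)}(s)\,\md s$. Commutativity \eqref{comm} then swaps the arguments inside each convolution, and multiplying by $a_m$ and summing over $0\leqslant m \leqslant M$ replaces $P_m^{(\alpha,\beta)}(t)$ with $f_M(t)$ throughout by linearity, producing \eqref{Pn}.

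The main obstacle is proving that $\mathcal{B}_n$ reduces to $S_n^{(\alpha,\beta)}/A^{(\alpha,\beta)}_{n+1}$, equivalently the scalar identity
\[
A^{(\alpha,\beta)}_{n+1}P_{n+1}^{(\alpha,\beta)}(-1) + B^{(\alpha,\beta)}_{n}P_n^{(\alpha,\beta)}(-1) + C^{(\alpha,\beta)}_{n-1}P_{n-1}^{(\alpha,\beta)}(-1) = S_n^{(\alpha,\beta)}.
\]
Substituting $P_k^{(\alpha,\beta)}(-1) = (-1)^k(\beta+1)_k/k!$ and the rational weights \eqref{ABC}, and aligning the three Pochhammer symbols via $(\beta+1)_{n+1} = (\beta+n)(\beta+n+1)(\beta+1)_{n-1}$ and $(\beta+1)_n = (\beta+n)(\beta+1)_{n-1}$, turns this into a polynomial identity in $\alpha, \beta, n$ once the common denominator $(\alpha+\beta+n)(\alpha+\beta+2n)(\alpha+\beta+2n+1)(\alpha+\beta+2n+2)$ is cleared. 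The verification is mechanical but Pochhammer-heavy. As consistency checks, $\alpha=\beta=0$ forces both sides to vanish (matching the absence of a source term in \eqref{LR}), and $\alpha=\beta=\lambda-1/2$ should reproduce the $S_n^{(\lambda)}$ of \eqref{SC} under the standard Jacobi--Gegenbauer rescaling.
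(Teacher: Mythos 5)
Your proposal takes exactly the route the paper intends: the paper omits this proof, remarking only that the result is ``analogous to Theorem \ref{THM:T} and Theorem \ref{THM:C}'' and derived from \eqref{drecP}, and your Leibniz-rule argument followed by integration in $x+1$, commutativity, and linearity is precisely that analogue. Your reduction of the boundary contribution to the scalar identity $A^{(\alpha,\beta)}_{n+1}P^{(\alpha,\beta)}_{n+1}(-1)+B^{(\alpha,\beta)}_{n}P^{(\alpha,\beta)}_{n}(-1)+C^{(\alpha,\beta)}_{n-1}P^{(\alpha,\beta)}_{n-1}(-1)=S^{(\alpha,\beta)}_{n}$ is the right closing step, and that identity is true (a finite rational-function check via $P^{(\alpha,\beta)}_k(-1)=(-1)^k(\beta+1)_k/k!$; your $\alpha=\beta=0$ and Gegenbauer limits are the right sanity tests), though it should be carried out rather than asserted. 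One substantive caveat: your own intermediate identity has the coefficient $+1/A^{(\alpha,\beta)}_{n+1}$ multiplying $I_n(x)$, so integrating produces $+1/A^{(\alpha,\beta)}_{n+1}$ on the double integral, whereas \eqref{Pn} as printed carries $-1/A^{(\alpha,\beta)}_{n+1}$; the plus sign is the correct one --- it is what the Chebyshev and Gegenbauer analogues \eqref{Tn} and \eqref{Cn} specialize to, and what the downstream recurrence \eqref{PRn} requires --- so your derivation actually yields the corrected statement, and you should flag the sign in \eqref{Pn} as a typographical error rather than claim to reproduce it verbatim.
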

Note that the second term on the right-hand side accounts for the middle term in 
\eqref{drecP}. Recognizing the convolutions in \eqref{Pn} as Jacobi series and 
then applying \eqref{irecP} to the first term on the right-hand side, we obtain 
the recurrence relation for the entries of a Jacobi convolution matrix 
$R^{(\alpha, \beta)}$.
\begin{theorem}[Construction of $R^{(\alpha, \beta)}$] 
The entries of the zeroth column of $R^{(\alpha, \beta)}$ are
\begin{subequations}
\begin{equation}
R^{(\alpha, \beta)}_{k,0} =
\begin{cases}
0 & \quad k > M+1, \\[2mm]
\displaystyle A^{(\alpha, \beta)}_k a_{k-1} + B^{(\alpha, \beta)}_k a_{k} + 
C^{(\alpha, \beta)}_k a_{k+1} & \quad 1 \leqslant k \leqslant M+1, \\[2mm]
\displaystyle \sum_{j=1}^{M+1} (-1)^{j+1} \frac{(\beta+1)_j}{j!} R^{(\alpha, 
\beta)}_{j,0} & \quad k = 0,
\end{cases}
\label{PR0}
\end{equation}
with $a_{M+1} = a_{M+2} = 0$. For $n \geqslant 0$,
\begin{equation}
\begin{multlined}
R^{(\alpha, \beta)}_{k, n+1} = \frac{B^{(\alpha, \beta)}_{k}-B^{(\alpha, 
\beta)}_{n}}{A^{(\alpha, \beta)}_{n+1}}R^{(\alpha, \beta)}_{k, n} 
-\frac{C^{(\alpha, \beta)}_{n-1}}{A^{(\alpha, \beta)}_{n+1}}R^{(\alpha, 
\beta)}_{k, n-1} + \frac{A^{(\alpha, \beta)}_{k}}{A^{(\alpha, 
\beta)}_{n+1}}R^{(\alpha, \beta)}_{k-1, n} \\
+ \frac{C^{(\alpha, \beta)}_{k}}{A^{(\alpha, \beta)}_{n+1}}R^{(\alpha, 
\beta)}_{k+1, n} +\frac{S^{(\alpha, \beta)}_{n}}{A^{(\alpha, 
\beta)}_{n+1}}R^{(\alpha, \beta)}_{k, 0},
\end{multlined}\label{PRn}
\end{equation}
\label{PR}
\end{subequations}
where $R^{(\alpha, \beta)}_{:, -1}$ are understood to be zeros.
\end{theorem}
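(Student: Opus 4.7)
My plan is to mimic the proofs of Theorems \ref{THM:R0}, \ref{THM:Rn} and \ref{THM:CR}: expand every $\tilde R_n^{(\alpha,\beta)}(y)$ appearing in the continuous identities as a Jacobi series and match coefficients of $P_k^{(\alpha,\beta)}(y)$ on the two sides. The one missing ingredient is a Jacobi analogue of Lemma \ref{LEM:int}, which I would extract in a single line from \eqref{irecP}: if $\phi(y)=\sum_j \alpha_j P_j^{(\alpha,\beta)}(y)$, then
\begin{equation*}
\int_{-1}^y \phi(t)\,\md t = \sum_{k\geqslant 0} \tilde\alpha_k P_k^{(\alpha,\beta)}(y), \qquad
\tilde\alpha_k = A^{(\alpha,\beta)}_k \alpha_{k-1} + B^{(\alpha,\beta)}_k \alpha_k + C^{(\alpha,\beta)}_k \alpha_{k+1} \quad (k\geqslant 1),
\end{equation*}
with $\tilde\alpha_0$ chosen so the antiderivative vanishes at $y=-1$. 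Applying this to $f_M=\sum_m a_m P_m^{(\alpha,\beta)}$ and using $\tilde R_0^{(\alpha,\beta)}(y)=\int_{-1}^y f_M(t)\,\md t$ immediately yields the middle branch of \eqref{PR0}; the vanishing of $R^{(\alpha,\beta)}_{k,0}$ for $k>M+1$ is automatic from the same formula with $a_{M+1}=a_{M+2}=0$, and the $k=0$ entry follows by evaluating the series at $y=-1$ with the classical identity $P_j^{(\alpha,\beta)}(-1) = (-1)^j (\beta+1)_j/j!$, which turns $\tilde R_0^{(\alpha,\beta)}(-1)=0$ into exactly the stated finite sum.

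For \eqref{PRn}, I would substitute the Jacobi expansions of $\tilde R_{n+1}^{(\alpha,\beta)}$, $\tilde R_n^{(\alpha,\beta)}$, $\tilde R_{n-1}^{(\alpha,\beta)}$ and $\tilde R_0^{(\alpha,\beta)}$ into \eqref{Pn} and pass the lone indefinite integral on the right-hand side through the auxiliary lemma above. Matching coefficients of $P_k^{(\alpha,\beta)}(y)$ for $k\geqslant 1$ then yields five contributions: from the integral come $A^{(\alpha,\beta)}_k R^{(\alpha,\beta)}_{k-1,n}$, $B^{(\alpha,\beta)}_k R^{(\alpha,\beta)}_{k,n}$, $C^{(\alpha,\beta)}_k R^{(\alpha,\beta)}_{k+1,n}$ (each divided by $A^{(\alpha,\beta)}_{n+1}$); the remaining three explicit terms of \eqref{Pn} contribute $-B^{(\alpha,\beta)}_n R^{(\alpha,\beta)}_{k,n}/A^{(\alpha,\beta)}_{n+1}$, $-C^{(\alpha,\beta)}_{n-1} R^{(\alpha,\beta)}_{k,n-1}/A^{(\alpha,\beta)}_{n+1}$, and $S^{(\alpha,\beta)}_n R^{(\alpha,\beta)}_{k,0}/A^{(\alpha,\beta)}_{n+1}$. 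The two contributions carrying index $(k,n)$ collapse to the factor $(B^{(\alpha,\beta)}_k-B^{(\alpha,\beta)}_n)/A^{(\alpha,\beta)}_{n+1}$ appearing in \eqref{PRn}, and the remaining four line up one-for-one. The zeroth-row entries $R^{(\alpha,\beta)}_{0,n+1}$, not shown explicitly in the theorem statement, are fixed exactly as in the zeroth-column calculation, by enforcing $\tilde R_{n+1}^{(\alpha,\beta)}(-1)=0$ through the same Pochhammer identity.

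The only genuinely new piece of bookkeeping compared with Gegenbauer is the middle term $B^{(\alpha,\beta)}_n$ in \eqref{drecP}/\eqref{irecP}, which vanishes in the symmetric case $\alpha=\beta$ and so never surfaced in Theorem \ref{THM:CR}. Tracking both $B^{(\alpha,\beta)}_n$ (coming from \eqref{Pn}) and $B^{(\alpha,\beta)}_k$ (coming from the integration lemma), and observing that they merge into the single $(B^{(\alpha,\beta)}_k-B^{(\alpha,\beta)}_n)$ coefficient, is the one step that differs nontrivially from the Chebyshev and Gegenbauer derivations; once that is handled, everything else is a direct transcription, and no new special-function identity is needed beyond $P_j^{(\alpha,\beta)}(-1) = (-1)^j (\beta+1)_j/j!$.
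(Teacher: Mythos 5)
Your proposal is correct and follows exactly the route the paper indicates (but does not write out in detail): recognize the convolutions in \eqref{Pn} as Jacobi series, push the lone double integral through the Jacobi analogue of Lemma \ref{LEM:int} read off from \eqref{irecP}, and match coefficients of $P_k^{(\alpha,\beta)}$, with the zeroth row fixed by enforcing vanishing at $y=-1$ via $P_j^{(\alpha,\beta)}(-1)=(-1)^j(\beta+1)_j/j!$. One remark: your bookkeeping takes the double-integral term in \eqref{Pn} with coefficient $+1/A^{(\alpha,\beta)}_{n+1}$, which is indeed what is needed to reproduce the signs in \eqref{PRn} (and what the Gegenbauer specialization \eqref{Cn} confirms), so the minus sign printed on that term in \eqref{Pn} appears to be a typographical slip rather than an error in your argument.
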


Again, the same stability issue holds us from constructing the Jacobi-based 
convolution matrices by using \eqref{PRn} directly. Fortunately, the symmetry 
persists though the recurrence relation \eqref{PRn} is augmented by the extra 
$R^{(\alpha, \beta)}_{k,n}$ term. Thus, the Jacobi-based convolution matrices 
are almost-banded too. Now, we are in a position to show the symmetry of the 
Jacobi-based convolution matrices, from which the symmetric properties of the 
Chebyshev- and Gegenbauer-based convolution matrices can be easily deduced.
\begin{theorem}[Symmetry of $R^{(\alpha, \beta)}$] \label{THM:symP}
For $M+1 \leqslant k,n \leqslant N$,
\begin{equation}
R_{n, k}^{(\alpha,\beta)} = (-1)^{k+n}\frac{(\alpha+\beta+2n+1)(\alpha +1)_k (\beta +1)_k 
\Big( (\alpha+\beta+1)_n \Big)^2}{(\alpha+\beta+2k+1) (\alpha+1)_n (\beta+1)_n 
\Big( (\alpha+\beta+1)_k \Big)^2}R_{k, n}^{(\alpha,\beta)}. \label{symP}
\end{equation}
\end{theorem}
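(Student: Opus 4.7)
My plan is to rescale the recurrence \eqref{PRn} into a manifestly self-adjoint form and then propagate the symmetry from a boundary base case. Introduce
\begin{equation*}
\phi(n) := \prod_{j=0}^{n-1} \frac{C^{(\alpha,\beta)}_j}{A^{(\alpha,\beta)}_{j+1}}, \qquad \widehat R_{k,n} := \frac{R^{(\alpha,\beta)}_{k,n}}{\phi(n)}.
\end{equation*}
Using the explicit formulas for $A^{(\alpha,\beta)}_m$, $B^{(\alpha,\beta)}_m$, $C^{(\alpha,\beta)}_m$, the adjacent Pochhammer factors telescope and one checks that
\begin{equation*}
\frac{\phi(k)}{\phi(n)} = (-1)^{k+n}\frac{(\alpha+\beta+2n+1)(\alpha+1)_k (\beta+1)_k \bigl((\alpha+\beta+1)_n\bigr)^2}{(\alpha+\beta+2k+1)(\alpha+1)_n (\beta+1)_n \bigl((\alpha+\beta+1)_k\bigr)^2}
\end{equation*}
coincides with the prefactor in \eqref{symP}. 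Hence the theorem is equivalent to the statement that $\widehat R_{k,n} = \widehat R_{n,k}$ throughout $M+1 \leq k, n \leq N$.

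Substituting $R^{(\alpha,\beta)}_{k,n} = \phi(n)\widehat R_{k,n}$ into \eqref{PRn} and using $\phi(n+1)/\phi(n) = C^{(\alpha,\beta)}_n/A^{(\alpha,\beta)}_{n+1}$ together with $\phi(n-1)/\phi(n) = A^{(\alpha,\beta)}_n/C^{(\alpha,\beta)}_{n-1}$ transforms the recurrence into
\begin{equation*}
C^{(\alpha,\beta)}_n \widehat R_{k,n+1} + A^{(\alpha,\beta)}_n \widehat R_{k,n-1} = (B^{(\alpha,\beta)}_k - B^{(\alpha,\beta)}_n)\widehat R_{k,n} + A^{(\alpha,\beta)}_k \widehat R_{k-1,n} + C^{(\alpha,\beta)}_k \widehat R_{k+1,n} + \frac{S^{(\alpha,\beta)}_n}{\phi(n)} R^{(\alpha,\beta)}_{k,0}.
\end{equation*}
Its homogeneous part is, by inspection, invariant under the simultaneous swap $k \leftrightarrow n$ and $\widehat R_{k,n} \leftrightarrow \widehat R_{n,k}$. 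Since $R^{(\alpha,\beta)}_{k,0} = 0$ for $k \geq M+2$, the inhomogeneous term drops out throughout the interior of the symmetry region, so the recurrence there is genuinely self-adjoint and propagates symmetry inductively: once $\widehat R$ is symmetric on the columns already computed, comparing the recurrence at $(k,n)$ with its swapped form forces $\widehat R_{k,n+1} = \widehat R_{n+1,k}$ whenever every index that appears lies safely inside the region.

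The main obstacle is the base of this induction along the boundary $k = M+1$ (equivalently $n = M+1$), where the inhomogeneous term persists because $R^{(\alpha,\beta)}_{M+1,0} = A^{(\alpha,\beta)}_{M+1} a_M$ is in general nonzero. To handle it, I would exploit the linearity of $R^{(\alpha,\beta)}$ in $(a_0,\ldots,a_M)$ and reduce to $f_M = P^{(\alpha,\beta)}_m$ for each fixed $0 \leq m \leq M$ in turn; for such $f_M$ column zero of $R^{(\alpha,\beta)}$ has only the three nonzero entries $C^{(\alpha,\beta)}_{m-1}$, $B^{(\alpha,\beta)}_m$, $A^{(\alpha,\beta)}_{m+1}$ at rows $m-1,m,m+1$. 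Starting from the inner-product representation
\begin{equation*}
\int_{-1}^1 (1-y)^\alpha (1+y)^\beta P^{(\alpha,\beta)}_k(y)\,\bigl(P^{(\alpha,\beta)}_m \ast P^{(\alpha,\beta)}_n\bigr)(y-1)\,dy,
\end{equation*}
applying Rodrigues' formula to $P^{(\alpha,\beta)}_k$ and integrating by parts $k$ times, the fact that $P^{(\alpha,\beta)}_m$ is a polynomial of degree at most $M$ collapses the $k$-th derivative of the convolution for $k \geq M+1$ to $\sum_{j=0}^M f_M^{(j)}(-1)\,(P^{(\alpha,\beta)}_n)^{(k-1-j)}(y)$; substituting the Jacobi derivative identity $(P^{(\alpha,\beta)}_n)^{(l)} = \frac{(n+\alpha+\beta+1)_l}{2^l} P^{(\alpha+l,\beta+l)}_{n-l}$ and invoking Jacobi orthogonality then express $R^{(\alpha,\beta)}_{M+1,n}$ as an explicit closed form. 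The parallel computation for $R^{(\alpha,\beta)}_{n,M+1}$ yields a matching expression, and verifying that their ratio equals $\phi(n)/\phi(M+1)$ reduces to a Pochhammer-symbol identity. Once the boundary is settled, the interior induction completes the proof of Theorem \ref{THM:symP}; Theorems \ref{THM:symT} and \ref{THM:symC} then follow by specializing $(\alpha,\beta) = (-1/2,-1/2)$ (with the Chebyshev--Jacobi normalization producing the factors of $k/n$) and $\alpha = \beta = \lambda - 1/2$ in \eqref{symP}, and simplifying the resulting Pochhammer ratios.
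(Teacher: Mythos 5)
Your central mechanism is the same as the paper's. The paper also works from the homogeneous form \eqref{PRn1} of \eqref{PRn} valid for $k\geqslant M+2$, writes $R^{(\alpha,\beta)}_{k,n}=r(n,k,\ua)\,R^{(\alpha,\beta)}_{n,k}$, and by comparing that recurrence with its $k\leftrightarrow n$ swap extracts the one-step ratio relations $r(n,k+1,\ua)/r(n,k,\ua)=A^{(\alpha,\beta)}_{k+1}/C^{(\alpha,\beta)}_{k}$ and $r(n+1,k,\ua)/r(n,k,\ua)=C^{(\alpha,\beta)}_{n}/A^{(\alpha,\beta)}_{n+1}$, which telescope from $r(M+1,M+1,\ua)=1$ to the product $\phi(n)/\phi(k)$ --- the reciprocal of your prefactor. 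Your rescaling $\widehat R_{k,n}=R^{(\alpha,\beta)}_{k,n}/\phi(n)$ and the self-adjointness check are a clean repackaging of that computation, and the algebra in that part of your proposal is correct.

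Where you depart from the paper is the boundary $k=M+1$ (equivalently $n=M+1$), and that is where your proposal has a genuine gap. In the paper the ratio relations reach the boundary through the $R^{(\alpha,\beta)}_{k-1,n}$ and $R^{(\alpha,\beta)}_{n-1,k}$ terms of the recurrences matched at $k,n\geqslant M+2$, so no separate evaluation of boundary entries is needed; your induction, by contrast, genuinely requires $\widehat R_{k,M+1}=\widehat R_{M+1,k}$ as input, and the plan you sketch for it does not go through as written. After applying Rodrigues' formula to $P^{(\alpha,\beta)}_k$ and integrating by parts $k$ times, the surviving integral carries the weight $(1-y)^{\alpha+k}(1+y)^{\beta+k}$, whereas the derivatives $\bigl(P^{(\alpha,\beta)}_n\bigr)^{(k-1-j)}$ are proportional to $P^{(\alpha+k-1-j,\,\beta+k-1-j)}_{n-k+1+j}$, which are orthogonal with respect to a \emph{different} weight for every $j<k-1$. ``Invoking Jacobi orthogonality'' therefore does not collapse the sum; you would need the full connection-coefficient expansion, which is essentially the closed-form convolution formula of \cite{lou} that the authors explicitly set aside as intractable. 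So the base case of your induction is an unproven, and as sketched quite hard, claim rather than a routine verification. If you wish to keep the inductive framing, the boundary ratio should instead be obtained as the paper obtains it, from the coefficient matching itself, rather than by direct evaluation of the boundary entries.
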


\begin{proof}
For $k \geqslant M+2$, \eqref{PRn} reduces to 
\begin{equation}
\begin{multlined}
\frac{A^{(\alpha, \beta)}_{n+1}}{B^{(\alpha, \beta)}_k-B^{(\alpha, 
\beta)}_n}R^{(\alpha, \beta)}_{k,n+1} + \frac{C^{(\alpha, 
\beta)}_{n-1}}{B^{(\alpha, \beta)}_k-B^{(\alpha, \beta)}_n} R^{(\alpha, 
\beta)}_{k,n-1} - R^{(\alpha, \beta)}_{k,n} \\
-\frac{A^{(\alpha, \beta)}_k}{B^{(\alpha, \beta)}_k-B^{(\alpha, 
\beta)}_n}R^{(\alpha, \beta)}_{k-1,n} - \frac{C^{(\alpha, 
\beta)}_k}{B^{(\alpha, \beta)}_k-B^{(\alpha, \beta)}_n} R^{(\alpha, 
\beta)}_{k+1,n}=0, 
\end{multlined}\label{PRn1}
\end{equation}
since $R^{(\alpha, \beta)}_{k, 0} = 0$ when $k \geqslant M+2$.

Noting that $R^{(\alpha, \beta)}_{k,n}$ is a rational function of $n$, $k$, and 
$\ua$, we denote the ratio of $R^{(\alpha, \beta)}_{k,n}$ and $R^{(\alpha, 
\beta)}_{n, k}$ by $r(n,k, \ua)$, that is,
\begin{equation}
R^{(\alpha, \beta)}_{k,n} = r(n,k, \ua) R^{(\alpha, \beta)}_{n,k}, \label{ratio}
\end{equation}
with $r(n,n, \ua)=1$.

Substituting \eqref{ratio} into \eqref{PRn1} and dividing all terms by $r(n, 
k, \ua)$, we have
\begin{equation*}
\begin{multlined}
\frac{A^{(\alpha, \beta)}_{n+1}}{B^{(\alpha, \beta)}_k-B^{(\alpha, 
\beta)}_n}\frac{r(n+1,k, \ua)}{r(n,k, \ua)} R^{(\alpha, \beta)}_{n+1,k} + 
\frac{C^{(\alpha, \beta)}_{n-1}}{B^{(\alpha, \beta)}_k - B^{(\alpha, 
\beta)}_n}\frac{r(n-1,k, \ua)}{r(n,k, \ua)} R^{(\alpha, \beta)}_{n-1,k} - 
R^{(\alpha, \beta)}_{n,k} \\ 
-\frac{A^{(\alpha, \beta)}_{k}}{B^{(\alpha, \beta)}_k-B^{(\alpha, 
\beta)}_n}\frac{r(n,k-1, \ua)}{r(n,k, \ua)} R^{(\alpha, \beta)}_{n,k-1} 
-\frac{C^{(\alpha, \beta)}_k}{B^{(\alpha, \beta)}_k-B^{(\alpha, 
\beta)}_n}\frac{r(n,k+1, \ua)}{r(n,k, \ua)} R^{(\alpha, \beta)}_{n,k+1}=0.
\end{multlined}\label{PRn2}
\end{equation*}

Now, swapping $k$ and $n$ in \eqref{PRn1} gives
\begin{equation*}
\begin{multlined}
\frac{A^{(\alpha, \beta)}_{k+1}}{B^{(\alpha, \beta)}_n-B^{(\alpha, 
\beta)}_k}R^{(\alpha, \beta)}_{n,k+1} + \frac{C^{(\alpha, 
\beta)}_{k-1}}{B^{(\alpha, \beta)}_n-B^{(\alpha, \beta)}_k}R^{(\alpha, 
\beta)}_{n,k-1}- R^{(\alpha, \beta)}_{n,k} \\
-\frac{A^{(\alpha, \beta)}_{n}}{B^{(\alpha, \beta)}_n-B^{(\alpha, 
\beta)}_k}R^{(\alpha, \beta)}_{n-1,k} - \frac{C^{(\alpha, 
\beta)}_{n}}{B^{(\alpha, \beta)}_n-B^{(\alpha, \beta)}_k}R^{(\alpha, 
\beta)}_{n+1,k}=0. 
\end{multlined}\label{PRn3}
\end{equation*}
Matching the terms in the last two equations, we obtain two recurrence 
relations for $r(n, k, \ua)$
\begin{equation*}
\frac{r(n,k+1, \ua)}{r(n,k, \ua)} = \frac{A^{(\alpha, \beta)}_{k+1}}{C^{(\alpha, 
\beta)}_k} \quad \mbox{and} \quad \frac{r(n+1,k, \ua)}{r(n,k, \ua)} = 
\frac{C^{(\alpha, \beta)}_n}{A^{(\alpha, 
\beta)}_{n+1}}
\end{equation*}
for any $n, k\geqslant M+1$. Therefore, 
\begin{align*}
& r(n,k, \ua) = r(n,M+1, \ua) \hspace{-3mm} \prod_{j=M+1}^{k-1} 
\frac{A^{(\alpha, \beta)}_{j+1}}{C^{(\alpha, \beta)}_{j}} \\ 
\text{and }\quad &r(n,M+1, \ua) = r(M+1,M+1, \ua) \hspace{-3mm} 
\prod_{j=M+1}^{n-1} \frac{C^{(\alpha, \beta)}_{j}}{A^{(\alpha, \beta)}_{j+1}},
\end{align*}
which, combined, give
\begin{equation}
r(n,k, \ua) = \left(\prod_{j=0}^{n-1} \frac{C^{(\alpha, \beta)}_{j}}{A^{(\alpha, 
\beta)}_{j+1}}\right) \left(\prod_{j=0}^{k-1} \frac{A^{(\alpha, 
\beta)}_{j+1}}{C^{(\alpha, \beta)}_{j}} \right),
\end{equation}
where $r(M+1,M+1,\ua) = 1$ is used. Substituting \eqref{ABC} in gives 
\eqref{symP}.
\end{proof}

Now we are ready to show Theorems \ref{THM:symT} and \ref{THM:symC}. 
\begin{proof}[Proof of Theorem \ref{THM:symT}]
Taking the limit of \eqref{symP} as $\alpha,\beta\to -1/2$, we have 
\begin{equation}\label{symP half}
R_{n,k}^{(-\frac{1}{2},-\frac{1}{2})} = (-1)^{n+k}\frac{k}{n}\displaystyle 
\frac{\left(\left( \frac{1}{2} \right)_k\right)^2 
\left(n!\right)^2}{\left(\left(\frac{1}{2} \right)_n\right)^2\left(k!\right)^2} 
\ R_{k,n}^{(-\frac{1}{2},-\frac{1}{2})}.
\end{equation}
Relating Chebyshev polynomial $T_n(x)$ to Jacobi polynomial 
$P^{(-1/2,-1/2)}_n(x)$ by
\begin{equation}
P_n^{(-\frac{1}{2},-\frac{1}{2})}(x) = \frac{\left(\frac{1}{2}\right)_n}{n!} 
T_n(x),
\end{equation}
we obtain
\begin{equation}
R_{k,n}^{(-\frac{1}{2},-\frac{1}{2})} = \frac{\left(\frac{1}{2}\right)_n 
k!}{\left(\frac{1}{2}\right)_k n!} R_{k,n} \label{scalingT}
\end{equation}
for any $k,n \geqslant 0$. Finally, substituting \eqref{scalingT} into 
\eqref{symP half} gives \eqref{symT}.
\end{proof}

\begin{proof}[Proof of Theorem \ref{THM:symC}]
The limit of \eqref{symP} as $\alpha,\beta\to \lambda-1/2$ is
\begin{equation}\label{symP G}
R_{n,k}^{(\lambda-\frac{1}{2},\lambda-\frac{1}{2})} = (-1)^{k+n} 
\frac{\lambda+n}{\lambda+k} \frac{ \left((2 \lambda)_n\right)^2 
\left(\left(\lambda+\frac{1}{2}\right)_k \right)^2 } {\left((2 
\lambda)_k\right)^2   
\left(\left(\lambda+\frac{1}{2}\right)_n\right)^2}R_{k,n}^{(\lambda-\frac{1}{2
},\lambda-\frac{1}{2})} 
\end{equation}
With the scaling between $C_n^{(\lambda)}$ and 
$P_{n}^{(\lambda-1/2,\lambda-1/2)}$ 
\begin{equation*}
P_{n}^{(\lambda-\frac{1}{2},\lambda-\frac{1}{2})} = \frac{\left(\lambda+\frac{1}{2}\right)_n}{\left(2\lambda\right)_n} C_n^{(\lambda)}(x), 
\end{equation*}
we have 
\begin{equation}\label{scalingC}
R_{k,n}^{(\lambda-\frac{1}{2},\lambda-\frac{1}{2})}  
=\frac{ \left(\lambda +\frac{1}{2}\right)_k (2 \lambda)_n}{(2\lambda)_k 
\left(\lambda+\frac{1}{2}\right)_n} R_{k,n}^{(\lambda)}
\end{equation}
for all $k,n\geqslant 0$. Combining \eqref{scalingC} and \eqref{symP G} yields
\eqref{symC}.
\end{proof}

The symmetric relation \eqref{symP} cannot be used directly due to the 
arithmetic overflow for large $k$ and $n$. Instead, we cancel the common factors 
in the numerator and the denominator and match up the factors of similar 
magnitude to obtain an equivalent, but numerically more manageable formula by 
noting that \eqref{symP} is only needed for $k>n$:
\begin{equation}
R_{n, k}^{(\alpha, \beta)} = (-1)^{k+n}\frac{\alpha+\beta+2n+1}{\alpha+\beta+2k+1}
\prod_{j=n}^{k-1}\frac{j+\alpha+1}{j+\alpha+\beta+1}
\prod_{j=n}^{k-1}\frac{j+\beta+1}{j+\alpha+\beta+1} R_{k, n}^{(\alpha, \beta)}. 
\label{symPs}
\end{equation}

When we recurse for the top rows, we need to rewrite \eqref{PRn} as we do for 
the Chebyshev and Gegenbauer cases:
\begin{equation}
\begin{multlined}
R^{(\alpha, \beta)}_{k-1, n} = \frac{B^{(\alpha, \beta)}_{n}-B^{(\alpha, 
\beta)}_{k}}{A^{(\alpha, \beta)}_{k}}R^{(\alpha, \beta)}_{k, n} 
+\frac{C^{(\alpha, \beta)}_{n-1}}{A^{(\alpha, \beta)}_{k}}R^{(\alpha, 
\beta)}_{k, n-1} + \frac{A^{(\alpha, \beta)}_{n+1}}{A^{(\alpha, 
\beta)}_{k}}R^{(\alpha, \beta)}_{k, n+1} \\
- \frac{C^{(\alpha, \beta)}_{k}}{A^{(\alpha, \beta)}_{k}}R^{(\alpha, 
\beta)}_{k+1, n} - \frac{S^{(\alpha, \beta)}_{n}}{A^{(\alpha, 
\beta)}_{k}}R^{(\alpha, \beta)}_{k, 0}.
\end{multlined}\label{PRnC}
\end{equation}

Now we recap the algorithms for constructing the Gegenbauer- and the 
Jacobi-based convolution matrices simultaneously:
\begin{algorithm}[H]
\caption{Construction of the convolution matrix $R^{(\lambda)}/R^{(\alpha, 
\beta)}$}\label{ALGO:geja}
 %\begin{multicols}{2}
\begin{algorithmic}[1]
\STATE Construct the non-zero entries in the zeroth column 
$R^{(\lambda)}_{:, 0}/R^{(\alpha, \beta)}_{:, 0}$ using \eqref{CR0} 
/\eqref{PR0}.
\STATE Calculate the non-zero entries on and below the main diagonal using 
\eqref{CRn} /\eqref{PRn}.
\STATE Calculate the non-zero entries above the main diagonal in rows $M+1$ to 
$N-1$ using \eqref{symC}/\eqref{symPs}.
\STATE Calculate the entries above the main diagonal in the top $M+1$ rows 
using \eqref{CRnC}/\eqref{PRnC}. 
\end{algorithmic}
\end{algorithm}

The complexity of Algorithm \ref{ALGO:geja} is $\bigO(MN)$, same as that of 
Algorithm \ref{ALGO:cheb}. To keep the computational cost minimal in practice, 
particularly for the Jacobi-based convolution matrices, we precompute and store 
$\{A^{(\alpha, \beta)}_j\}_{j=1}^{M+N+1}$, $\{B^{(\alpha, 
\beta)}_j\}_{j=0}^{M+N+1}$, $\{C^{(\alpha, \beta)}_j\}_{j=0}^{M+N+1}$, and 
$\{S^{(\alpha, \beta)}_j\}_{j=1}^{M+N}$ for the use in steps 2 and 4. For step 
3, we can first compute the ratio factor on the right-hand side of 
\eqref{symPs} for $(M+1)$-th column and the ratio factor for a subsequent column 
can be updated from that of the last column accordingly.

\textbf{Example 5:} We repeat Example 3 for Gegenbauer- and Jacobi-based 
convolution matrices. Again, the coefficient vector $\ua$ is randomly generated
with $|a_m|\leqslant 1$. Figure \ref{FIG:geja} shows the entrywise absolute 
error in \subref{FIG:gegen} the Gegenbauer convolution matrix for $\lambda=2$ 
and \subref{FIG:jac} the Jacobi convolution matrix for $\alpha=2$ and 
$\beta=3/2$, where $M=1000$ and $N=5000$ for both experiments. The largest 
entrywise error in Figure \ref{FIG:gegen} is $1.14\times 10^{-11}$, whereas 
$2.40 \times 10^{-11}$ in Figure \ref{FIG:jac}. In these experiments, the 
magnitudes of the entries range \subref{FIG:gegen} from $\bigO(10^{-324})$ to 
$\bigO(10^{3})$ and \subref{FIG:jac} from $\bigO(10^{-324})$ to $\bigO(1)$, 
respectively.

\begin{figure}[t]
\centering
\begin{subfigure}[b]{0.45\textwidth}
\includegraphics[scale=0.5]{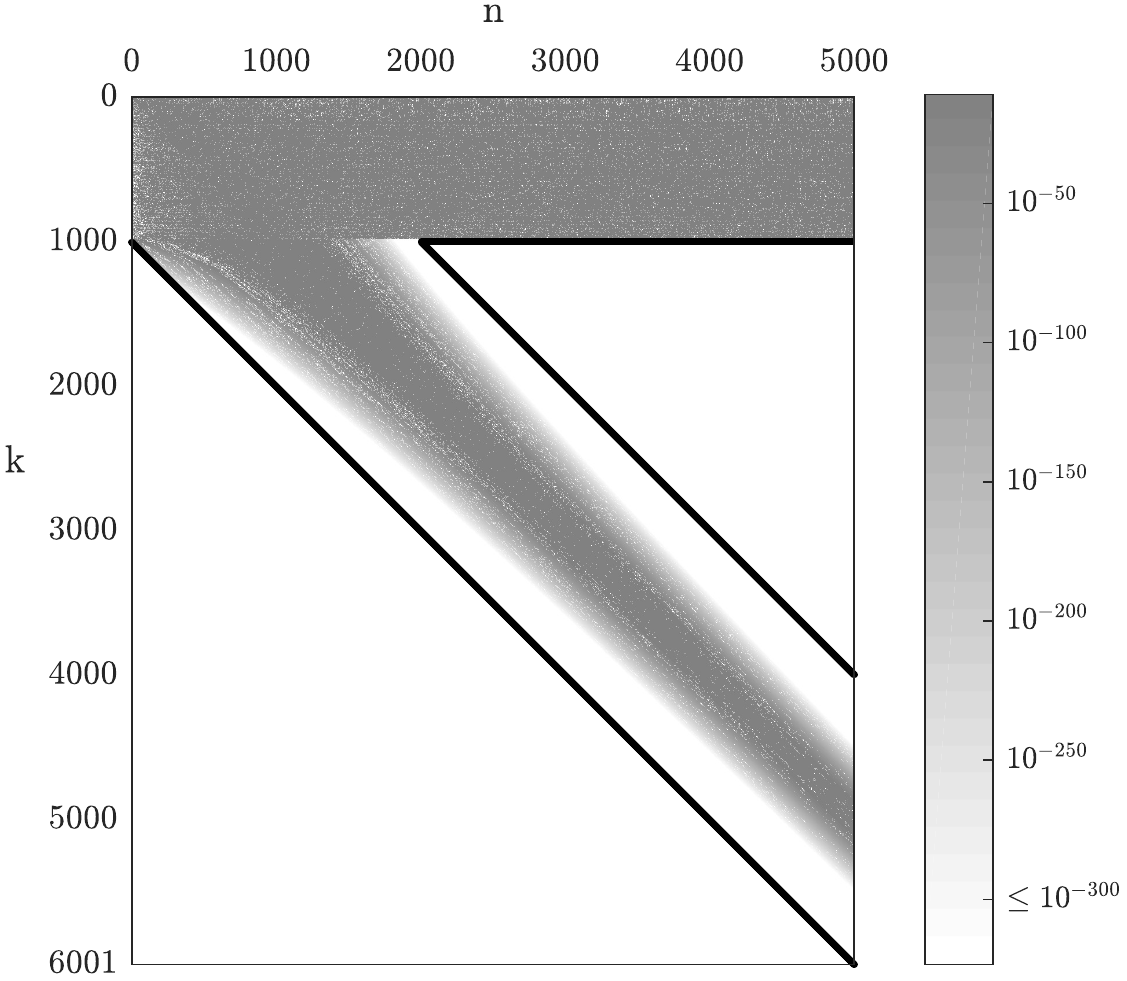}\caption{Gegenbauer 
($\lambda = 2$)} \label{FIG:gegen}
\end{subfigure}
\begin{subfigure}[b]{0.45\textwidth}
\includegraphics[scale=0.5]{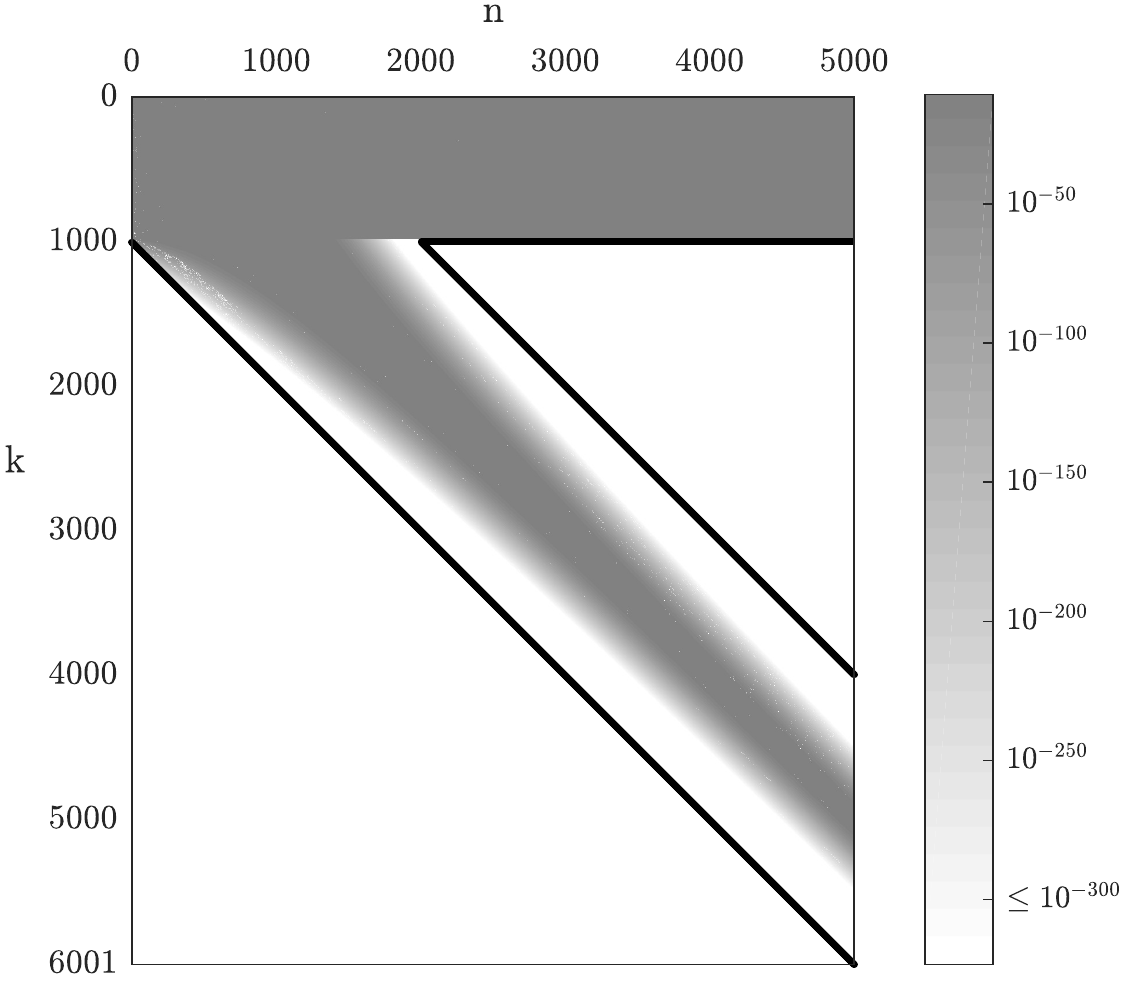}\caption{Jacobi 
($\alpha=2, \beta=3/2$)}\label{FIG:jac}
\end{subfigure}
\caption{Entrywise error of (a) the Gegenbauer-based convolution matrix and (b) 
the Jacobi-based convolution matrix obtained using Algorithm \ref{ALGO:geja}. 
For both examples, $M=1000$ and $N=5000$.}\label{FIG:geja}
\end{figure}

\section{Laguerre-based convolution matrices} \label{SEC:lag}
If a smooth function defined on a semi-infinite domain decays to zero fast 
enough, it can be approximated by a series of weighted Laguerre polynomials
\begin{equation}
L_n^W(x) = e^{-x/2}L_n(x),
\end{equation}
where $L_n(x)$ is the Laguerre polynomial of degree $n$. Thus we consider the 
approximation of the convolution operator defined by such a function using 
Laguerre-based convolution matrices. We start with the following lemma 
which can be found in many standard texts, for example, \cite[(18.17.2)]{dlmf}.
\begin{lemma}[Convolution of Laguerre polynomials]\label{LEM:convL}
\begin{equation}
\int_{0}^{x}L_m(x-t)L_{n}(t)\dt = L_{m+n}(x)-L_{m+n+1}(x),
\end{equation}
for $x \in [0, \infty]$.
\end{lemma}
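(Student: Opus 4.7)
My plan is to prove this via the Laplace transform, which is the most natural tool given that the left-hand side is exactly a Laplace convolution and since Laguerre polynomials have the unusually clean transform
\begin{equation*}
\mathcal{L}\{L_n\}(s) \;=\; \frac{(s-1)^n}{s^{n+1}}, \qquad \mathrm{Re}\, s > 0.
\end{equation*}
This is a classical identity (derivable, e.g., from the generating function $\sum_{n\ge 0} L_n(x)t^n = (1-t)^{-1}\exp(-xt/(1-t))$ by term-by-term Laplace transformation), and I would cite it rather than rederive it.

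With that in hand, the argument is short. First, apply the Laplace convolution theorem to the left-hand side, yielding
\begin{equation*}
\mathcal{L}\!\left\{\int_0^x L_m(x-t)L_n(t)\,\mathrm{d}t\right\}(s)
= \frac{(s-1)^m}{s^{m+1}}\cdot\frac{(s-1)^n}{s^{n+1}}
= \frac{(s-1)^{m+n}}{s^{m+n+2}}.
\end{equation*}
Second, compute the Laplace transform of the right-hand side and factor:
\begin{equation*}
\mathcal{L}\{L_{m+n} - L_{m+n+1}\}(s)
= \frac{(s-1)^{m+n}}{s^{m+n+1}} - \frac{(s-1)^{m+n+1}}{s^{m+n+2}}
= \frac{(s-1)^{m+n}\bigl(s - (s-1)\bigr)}{s^{m+n+2}},
\end{equation*}
which collapses to exactly the same expression. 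Third, since both sides of the claimed identity are polynomials in $x$, and their Laplace transforms agree on a right half-plane, they coincide pointwise on $[0,\infty)$ by uniqueness of the Laplace transform for continuous functions.

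I do not anticipate a genuine obstacle; the only thing to be careful about is that the identity $\mathcal{L}\{L_n\}(s)=(s-1)^n/s^{n+1}$ is stated correctly for the standard (monic-leading-coefficient-$(-1)^n/n!$) normalization that the paper uses, and that uniqueness of the Laplace transform is applied in a justified setting, which it is because both sides are polynomials. If a self-contained route without invoking Laplace transforms were preferred, I would instead multiply both sides by $u^m v^n$ and sum: the left-hand side becomes $\int_0^x G(x-s,u)G(s,v)\,\mathrm{d}s$ with $G(x,t)=(1-t)^{-1}e^{-xt/(1-t)}$, which evaluates to $(e^{-xv/(1-v)}-e^{-xu/(1-u)})/(u-v)$, and the telescoping identity $\sum_{m+n=N}u^m v^n = (u^{N+1}-v^{N+1})/(u-v)$ shows the right-hand side yields the same bivariate generating function; matching coefficients then finishes the proof.
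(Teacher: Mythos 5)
Your proof is correct. Note, though, that the paper does not actually prove this lemma at all: it simply cites it as a classical identity, namely DLMF (18.17.2). So your Laplace-transform argument is a genuine, self-contained derivation where the paper offers only a reference. The computation checks out: with $\mathcal{L}\{L_n\}(s)=(s-1)^n/s^{n+1}$, the convolution theorem gives $(s-1)^{m+n}/s^{m+n+2}$ for the left-hand side, and the telescoping factorization $s-(s-1)=1$ gives the same for the right-hand side; since both sides are polynomials in $x$, uniqueness of the Laplace transform on continuous functions of at most exponential growth closes the argument. Your backup generating-function route is also sound (the integral $\int_0^x G(x-s,u)G(s,v)\,\mathrm{d}s$ does evaluate to $(e^{-xv/(1-v)}-e^{-xu/(1-u)})/(u-v)$, and the right-hand side telescopes to the same bivariate series), and has the virtue of avoiding any appeal to transform uniqueness. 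The only cosmetic slip is your parenthetical description of the normalization: $L_n$ with leading coefficient $(-1)^n/n!$ is not monic, but it is the standard normalization with $L_n(0)=1$ that both the transform formula and the paper use, so nothing in the argument is affected.
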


Consider the convolution of continuous decaying functions $f(x)$ and $g(x)$ 
defined on $[0, \infty]$:
\begin{equation}
V[f](g) = h(x) = \int_0^x f(x-t) g(t)\dt, ~~~~~ x \in [0, \infty].
\label{VV}
\end{equation}
Suppose that $f(x)$ and $g(x)$ are represented by infinite weighted Laguerre 
series
\begin{equation}
f(x) = e^{-x/2}\sum_{m=0}^{\infty} a_m L_m(x) \quad \mbox{and} \quad 
g(x) = e^{-x/2}\sum_{n=0}^{\infty} b_n L_n(x), \qquad x \in [0, 
\infty],
\end{equation}
and assume that the convolution
\begin{equation}
h(x) = e^{-x/2} \sum_{k=0}^{\infty} c_k L_k(x), \qquad x \in [0, \infty],
\end{equation}
so that $\underline{c} = R^L\underline{b}$, where $R^L$ is the Laguerre 
convolution matrices generated by $\ua$. By Lemma \ref{LEM:convL}, the entries 
of $R^L$ are 
explicitly known.  
\begin{theorem}[Construction of $R^L$]
The matrix approximation of convolution operator $V[f]$ in Laguerre space is 
the difference of two lower triangular Toeplitz matrices, where the second one 
is obtained by adding one row of zeros on the top of the first:
\begin{equation}
R^L=
\begin{pmatrix}
  a_0 & 0 & 0 & 0 & \cdots \\[-0.4em]
  a_1 & a_0 & 0 & 0 & \ddots \\[-0.4em]
  a_2 & a_1 & a_0 & 0 & \ddots \\[-0.4em]
  a_3 & a_2 & a_1 & a_0 & \ddots \\[-0.4em]
  \vdots & \ddots & \ddots & \ddots & \ddots
\end{pmatrix}
-
\begin{pmatrix}
  0 & 0 & 0 & 0 & \cdots \\[-0.4em]
  a_0 & 0 & 0 & 0 & \ddots \\[-0.4em]
  a_1 & a_0 & 0 & 0 & \ddots \\[-0.4em]
  a_2 & a_1 & a_0 & 0 & \ddots \\[-0.4em]
  \vdots & \ddots & \ddots & \ddots & \ddots
\end{pmatrix}
.
\label{L}
\end{equation}
\end{theorem}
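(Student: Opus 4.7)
The plan is to substitute the weighted Laguerre expansions of $f$ and $g$ directly into the convolution integral \eqref{VV}, exploit the factorization of the exponential weight, and then apply Lemma \ref{LEM:convL} term by term to read off the matrix entries.

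First, I would write
\[
f(x-t)\,g(t) = e^{-(x-t)/2}\sum_{m=0}^{\infty} a_m L_m(x-t)\cdot e^{-t/2}\sum_{n=0}^{\infty} b_n L_n(t) = e^{-x/2}\sum_{m,n} a_m b_n L_m(x-t)L_n(t),
\]
so the crucial observation is that the $t$-dependence in the exponentials cancels and $e^{-x/2}$ factors cleanly out of the integral. Exchanging sum and integral (a formal step in the setting of Laguerre series where convergence can be assumed) gives
\[
h(x) = e^{-x/2}\sum_{m,n=0}^{\infty} a_m b_n \int_0^x L_m(x-t)L_n(t)\,\mathrm{d}t.
\]

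Next, I would invoke Lemma \ref{LEM:convL} to replace the inner integral by $L_{m+n}(x)-L_{m+n+1}(x)$, yielding
\[
h(x) = e^{-x/2}\sum_{m,n=0}^{\infty} a_m b_n\bigl[L_{m+n}(x)-L_{m+n+1}(x)\bigr].
\]
Since $h(x) = e^{-x/2}\sum_k c_k L_k(x)$, matching coefficients of $L_k(x)$ gives
\[
c_k = \sum_{m+n=k} a_m b_n \;-\; \sum_{m+n=k-1} a_m b_n = \sum_{n=0}^{k} a_{k-n} b_n \;-\; \sum_{n=0}^{k-1} a_{k-1-n} b_n,
\]
with the convention $a_j=0$ for $j<0$. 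Reading this as $\underline{c} = R^L\underline{b}$, we see $R^L_{k,n} = a_{k-n}-a_{k-1-n}$, which is exactly the difference of the two lower-triangular Toeplitz matrices displayed in \eqref{L}: the first has first column $(a_0,a_1,a_2,\dots)^T$, and the second is its one-row downshift with first column $(0,a_0,a_1,\dots)^T$.

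There is no real obstacle here beyond careful index bookkeeping; the step that needs the most attention is simply identifying the two convolution sums $\sum_{m+n=k}a_m b_n$ and $\sum_{m+n=k-1}a_m b_n$ with the $(k,n)$-entries of the two Toeplitz matrices in \eqref{L}. If one wishes to be rigorous about interchanging the summation and integration for infinite series, that could be justified under the standing decay assumption on $f,g$, but this is a technicality rather than the heart of the argument.
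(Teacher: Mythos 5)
Your proof is correct and follows exactly the route the paper intends: the paper offers no written proof beyond the remark that the entries of $R^L$ follow from Lemma \ref{LEM:convL}, and your argument --- factoring out $e^{-x/2}$ from the product of the weighted series, applying the lemma term by term, and matching coefficients of $L_k(x)$ to identify $R^L_{k,n}=a_{k-n}-a_{k-1-n}$ --- is precisely that calculation carried out in full. Nothing is missing.
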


When $f(x)$ and $g(x)$ are approximated by finite weighted Laguerre series, 
the convolution matrix $R^L$ becomes a banded lower-triangular Toeplitz matrix, 
as shown in Figure \ref{FIG:lag_structure} and the Toeplitz structure allows 
the fast application of $R^L$ to $\ub$ with the aid of FFT.

\textbf{Example 6:}  We consider the convolution of
\begin{equation*}
f(x) = \frac{1}{2} x^2 e^{-x} ~\mbox{ and }~ g(x) = - \frac{1}{3} 
\left(\cos\frac{\sqrt{3}}{2}x + \sqrt{3} 
\sin\frac{\sqrt{3}}{2}x\right)e^{-3x/2}, \quad x\in [0, \infty],
\end{equation*}
which are the functions from Example 4, only except that the constant $1/3$ is 
removed from the original $g(x)$ so that both the functions decay to zero at 
infinity. These two functions can be approximated by weighted Laguerre series of 
degree $2$ and $54$, respectively. In Figure \ref{FIG:lag_example}, the 
pointwise error in the computed approximant against the exact convolution 
\begin{equation*}
h(x) = (f\ast g)(x) = -\frac{1}{3} e^{-3 x/2} \left[e^{x/2} 
\left(x^2-x-1\right) 
+\sqrt{3} \sin \left(\frac{\sqrt{3} x}{2}\right)+\cos \left(\frac{\sqrt{3} 
x}{2}\right)\right]
\end{equation*}
is shown up to $x=10^4$ and the maximum error throughout $[0, \infty]$ is 
approximately $4.4 \times 10^{-15}$, occurring at about $x=2.9$.
\begin{figure}[h!]
\centering
\hspace{4mm}
\begin{subfigure}[b]{0.35\textwidth}
\includegraphics[scale=0.2]{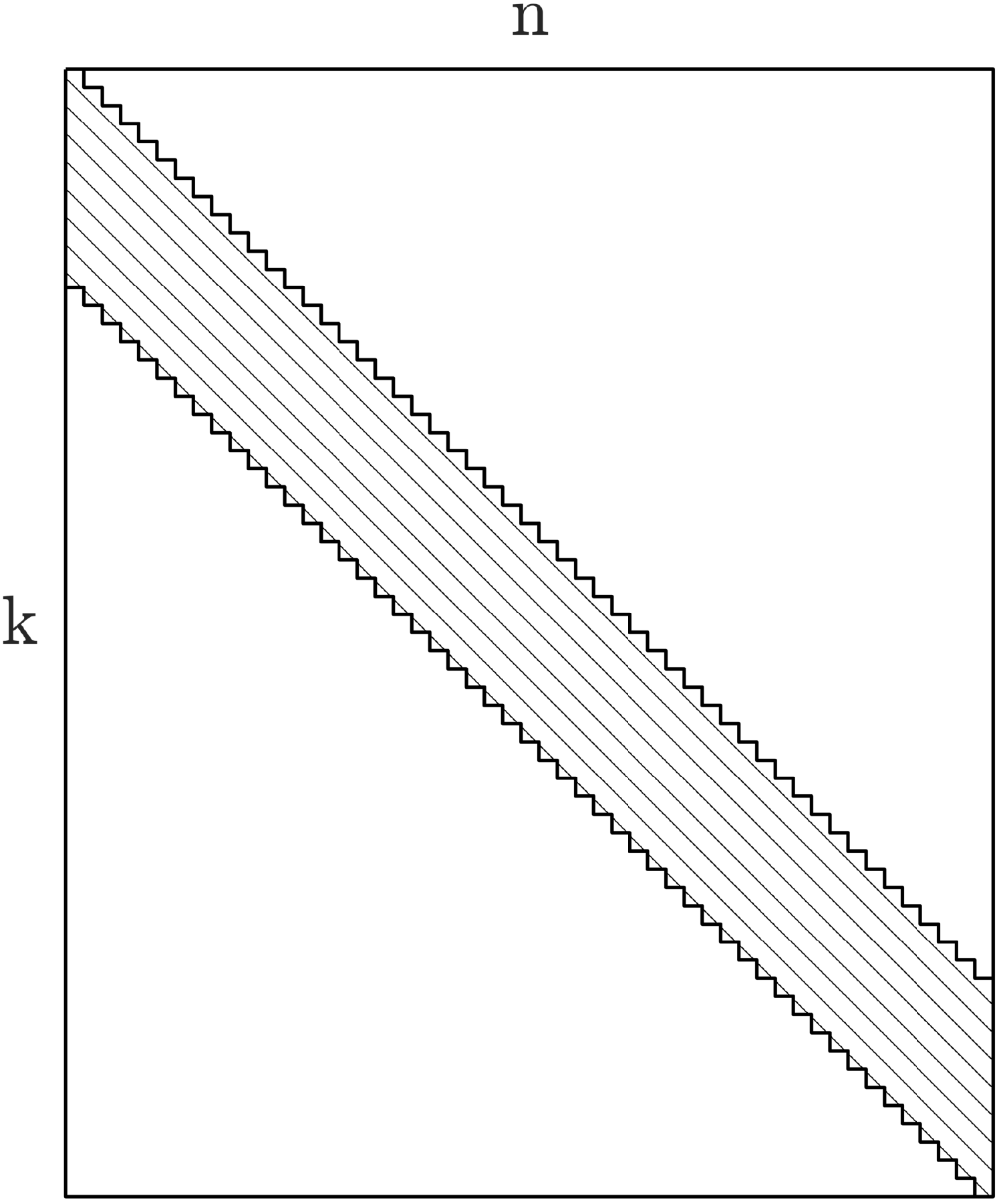}
\caption{~~~~~~~~} 
\label{FIG:lag_structure}
\end{subfigure}
\hspace{-5mm}
\begin{subfigure}[b]{0.63\textwidth}
\includegraphics[scale=0.24]{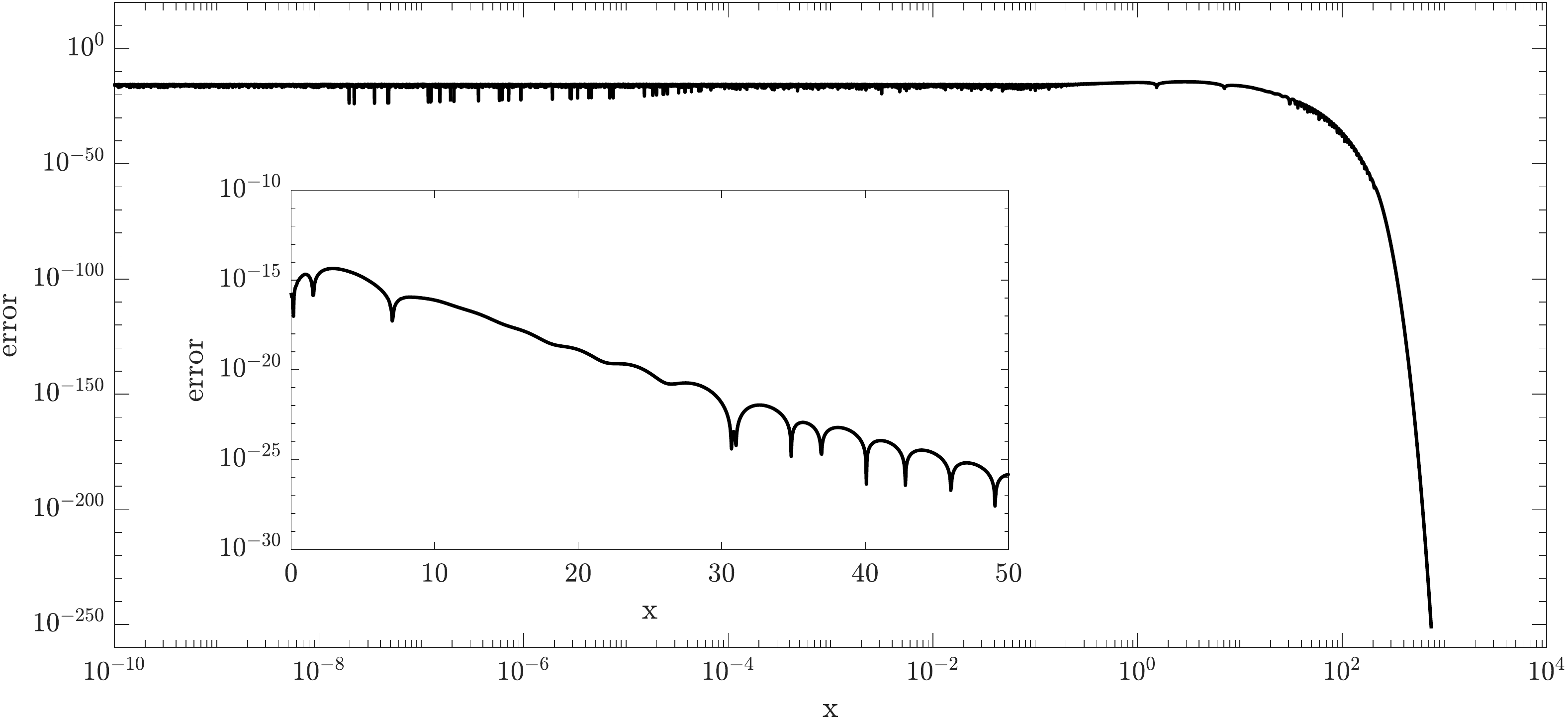}
\caption{ }
\label{FIG:lag_example}
\end{subfigure}
\caption{(a) A schematic of Laguerre convolution matrices, where we use stripes 
to indicate the Toeplitz structure. Outside the striped region, all entries are 
exactly zero. (b) Absolute error in the computed approximation to the 
convolution, where the inset is a close-up for $[0, 50]$.} \label{FIG:lag}
\end{figure}

\section{Closing remarks} \label{SEC:conc}
While we have focused exclusively on the left-sided convolution operator, with 
a few minor changes the framework we have presented can be extended to the 
right-sided convolution operator
\begin{equation}
V[f](g) = \int_{x-1}^1 f(x-t) g(t)\md t,~~~~~x \in [0, 2], 
\label{VVV}
\end{equation}
with $f(x)$ and $g(x)$ compactly supported on $[-1, 1]$ and the right-sided 
convolution matrices also enjoy similar recurrences and symmetric properties.

In Example 4, we have shown how convolution matrices can be employed to solve 
convolution integral equations. With the almost-banded structure of the 
convolution matrices, a fast spectral method can be developed based on the 
framework of infinite-dimensional linear algebra \cite{olv2} for solving 
convolution integro-differential equations of Volterra type:
\begin{equation}
\sum_{j=0}^{J} b^j(x) \frac{\md^j u(x)}{\md x^j}+ b^{J+1}(x)\int_a^x K(x-t)u(t) 
\dt = s(x),
\end{equation} 
where $b^j(x)$ for $0 \leqslant j \leqslant J+1$ are smooth functions on $[-1, 
1]$ and the convolution kernel $K(x-t)$ is smooth or weakly singular. We will 
report this line of research in a future work.

\section*{Acknowledgments}

We would like to thank Anthony P. Austin (Argonne), Alex Townsend (Cornell), 
and Haiyong Wang (HUST) for their extremely valuable commentary on an early 
draft of this paper, Marcus Webb (KU Leuven) for very helpful discussions, and 
Chuang Sun (MathWorks) for sharing his \textsc{Mathematica} tricks with the 
first author. Finally, we thank the anonymous referees, whose careful reading 
and feedback led us to improve our work.

\bibliographystyle{siam}
\bibliography{conv}

\end{document}